\theoremstyle{plain}
\newtheorem{theorem}{Theorem}[section]
\newtheorem{corollary}[theorem]{Corollary}
\newtheorem{lemma}[theorem]{Lemma}
\newtheorem{proposition}[theorem]{Proposition}
\theoremstyle{definition}
\newtheorem{definition}[theorem]{Definition}
\newtheorem{example}[theorem]{Example}
\theoremstyle{remark}
\newtheorem{remark}[theorem]{Remark}
\newtheorem*{acknowledgement*}{Acknowledgment}
\begin{document}

\title[Class-closing factor codes]{Class-closing factor codes and constant-class-to-one factor codes from shifts of finite type}

\author[M. Allahbakhshi]{Mahsa Allahbakhshi}
\address{Universidad de Santiago de Chile\\
    Alameda 3363\\
    Estaci{\'o}n Central\\
    Santiago de Chile\\
    Chile}
\email{mallahbakhshi@dim.uchile.cl}

\author[S. Hong]{Soonjo Hong}
\address{Centro de Modelamiento Matem\'atico \\
    Universidad de Chile \\
    Av. Blanco Encalada 2120, Piso 7 \\
    Santiago de Chile \\
    Chile}
\email{hsoonjo@dim.uchile.cl}

\author[U. Jung]{Uijin Jung}
\address{Department of Mathematics \\
	Ajou University \\
    Suwon 443-749 \\
	South Korea}
\email{uijin@ajou.ac.kr}

\date{}
\subjclass[2010]{Primary 37B10}
\keywords{class degree, transition class, class-closing factor code, constant-class-to-one factor code, shift of finite type}
\begin{abstract}
  We define class-closing factor codes from shifts of finite type and show that they are continuing if their images are of finite type. We establish several relations between class-closing factor codes, continuing factor codes and constant-class-to-one factor codes. In particular it is shown that a factor code between irreducible shifts of finite type is constant-class-to-one if and only if it is bi-class-closing, generalising a result of Nasu.
\end{abstract}
\maketitle

\section{Introduction}
  
  It is well known that given a finite-to-one factor code $\pi$ from a shift of finite type $X$ onto an irreducible sofic shift $Y$, almost all points in $Y$ have the same number of preimages. This number is called the \emph{degree} of $\pi$, is invariant under topological conjugacy and is essential in the study of finite-to-one codes \cite{Hed69, LinM95}. When $\pi$ is allowed to be infinite-to-one, the \emph{class degree} introduced in \cite{AllQ13} may be thought as a natural generalisation of the degree. For each $y$ in $Y$, define an equivalence relation on the fibre $\pi^{-1}(y)$ as follows: two points $x$ and $x'$ in $\pi^{-1}(y)$ are equivalent if there is another point $z$ in the same fibre which equals $x$ up to an arbitrarily large given coordinate and is right asymptotic to $x'$ and vice versa. Then the number of equivalence classes (called \emph{transition classes}) are finite for all $y$ in $Y$ and each right transitive point in $Y$ has the same number of classes. This number is called the \emph{class degree} of $\pi$ and equals to the degree when $\pi$ is finite-to-one \cite{AllQ13}.
  
  The class degree was initially devised in the study of the ergodic measures of relative maximal entropy \cite{AllQ13}. It is a conjugacy-invariant upper bound on the number of such measures over a fully supported ergodic measure. Although there are many freedom in the construction of infinite-to-one codes \cite{Boy83, BoyT84, Jun11}, it turned out that as for a finite-to-one code, fibres over almost all points for an infinite-to-one code are well-behaved \cite{AllHJ13}. Class degree inherits many important properties of the degree. A point $y$ in $Y$ is right transitive if and only if each transition class over $y$ contains a right transitive point. Also, any distinct transition classes over a right transitive point are uniformly separated by some constant. Other structural similarities were investigated and can be found in \cite{AllHJ13}.

  In \cite{AllHJ13,AllQ13}, the transition classes investigated are indeed {\em right} transition classes in this paper. The definition of a transition is asymmetric in time, so we may also consider the {\em left} transition classes. To concentrate only on typical points of $Y$, right transition classes are sufficient, as the set of right transition classes coincide with that of left ones for a doubly transitive point \cite[\S6]{AllHJ13}. However, to consider atypical points, we are naturally led to consider {\em left} transitions. For an example, say that a code is {\em constant-class-to-one} if each point of $Y$ has exactly the same number of (right) transition classes. It is defined in terms of only right transition classes. However, a factor code between irreducible shifts of finite type is constant-class-to-one if and only if the left transition classes and the right ones coincide for each $y$ in $Y$, or equivalently, each point of $Y$ has the same number of left transition classes (see Corollary \ref{cor:separate_right_and_left_classes} and Corollary \ref{cor:bi-class-closing_sft_iff_constant-to-one}).

  In this paper, we continue the investigation of infinite-to-one factor codes through transition classes and class degrees. By introducing the notion of a \emph{tangled set} of words, in \S 3 we give an alternative approach to the class degree \cite{AllQ13}.

  In \S 4, we generalise {\em closing} factor codes, the set of which forms an important class of finite-to-one factor codes, into {\em class-closing} factor codes. A code is called \emph{right class-closing} if any two left asymptotic points in $X$ with the same image are equivalent, rather than the same. If the code is finite-to-one, then a class-closing code is same as a closing code. One reason for which closing factor codes are useful comes from the fact that most crucial properties of any given sofic shift space, such as entropy or mixing property, are lifted to the edge shift determined by the canonical right (resp., left) resolving presentation of the system \cite{Kri84}. Moreover, all the known construction of finite-to-one factor codes between shifts of finite type involve closing codes. Their existence, properties and relations with other invariants such as dimension groups have been studied in depth \cite{Ash90,Ash93,BoyMT87,KitMT91,Nas83}. Several results are presented in \S 4 for class-closing factor codes, which are allowed to be infinite-to-one. They uniformly separate the transition classes over all points in $Y$ and possess the delays as for closing codes (see Theorem \ref{thm:class-closing_conditions}). One known generalisation of a closing code is a {\em continuing code} introduced in \cite{BoyT84}. We also show that a right (resp. left) class-closing factor code between irreducible shifts of finite type is right (resp. left) continuing (see Theorem \ref{thm:class-closing_is_continuing}).

  In \S 5, we investigate the structure of constant-class-to-one codes and present relations to class-closing codes. The structures of bi-closing factor codes are well studied. A finite-to-one factor code between irreducible shifts of finite type is constant-to-one if and only if it is bi-closing if and only if it is open \cite{CovP77, Jun09, Nas83}. generalising this result, we show that a factor code between irreducible shifts of finite type is constant-class-to-one if and only if it is bi-class-closing (see Corollary \ref{cor:bi-class-closing_sft_iff_constant-to-one}). In \cite{Jun11} it was shown that a factor code between shifts of finite type is bi-continuing if and only if it is open. From our result, now we know what lacks for a bi-continuing code to be a constant-class-to-one code. Other properties on bi-class-closing codes and constant-class-to-one codes will be also provided.
  
\section{Backgrounds}

  Basic notations and results on symbolic dynamics are reviewed here. For more expositions, we refer to \cite{LinM95}.

  An {\em alphabet} $\mathcal{A}$ is a finite set whose elements are called {\em symbols}. A {\em word} or {\em block} $w=w|_{[1,n]}$ of length $n\in\mathbb{Z}^+$ over $\mathcal{A}$ is a concatenation of $n$ symbols $w|_j,1\le j\le n$, chosen from $\mathcal{A}$. The {\em empty} word of length 0 is assumed to exist. A {\em shift space} $X$ over $\mathcal{A}$ is a closed $\sigma$-invariant subset of $\mathcal{A}^\mathbb{Z}$, endowed with the product topology, where the {\em shift map} $\sigma:\mathcal{A}^\mathbb{Z}\to\mathcal{A}^\mathbb{Z}$ is given by $\sigma(x)|_i=x|_{i+1}$. If for any $u,v$ in the set $\mathcal{B}(X)$ of all words occurring in the points of $X$ there is a word $w$ with $uwv$ in $\mathcal{B}(X)$, then $X$ is said to be {\em irreducible}. Let $\mathcal{B} _n(X)=\mathcal{B}(X)\cap\mathcal{A}^n$.

  A point $x$ of $X$ is {\em right transitive} if every block in $\mathcal{B}(X)$ occurs in $x|_{[0,\infty)}$.  Two points $x$ and $x'$ of $X$ are said to be {\em right asymptotic} if $x|_{[n,\infty)}=x'|_{[n,\infty)}$ for some $n\in\mathbb Z$. {\em Left transitive points} and {\em left asymptotic points} are defined analogously. Two points $x$ and $x'$ in a shift space are {\em mutually separated} if $x|_i\ne x'|_i$ for each $i\in\mathbb{Z}$.

  We call a continuous $\sigma$-commuting map between shift spaces a {\em code}, a surjective one a {\em factor code} and a bijective one a {\em conjugacy}. Shift spaces $X$ and $Y$ are {\em conjugate} if there is a conjugacy between them. A code $\pi:X\to Y$ is {\em 1-block} if $x|_0$ determines $\pi(x)|_0$ for any point $x$ in $X$. A 1-block code $\pi:X \to Y$ induces a map from $\mathcal{B}(X)$ into $\mathcal{B}(Y)$, also denoted by $\pi$. Every code $\pi:X\to Y$ is {\em recoded} to some 1-block code $\tilde\pi:\tilde X\to Y$, where $\tilde X$ is conjugate to $X$ by a conjugacy commuting with $\tilde\pi$. We call $\pi$ {\em constant-to-one} if $|\pi^{-1}(y)|$ is constant for all $y$ in $Y$ and {\em finite-to-one} if $|\pi^{-1}(y)|$ is finite for all $y$ in $Y$. Otherwise it is called  {\em infinite-to-one}. An image of an irreducible shift space under a factor code is also irreducible.
  
  A factor code $\pi:X\to Y$ is {\em right closing} if it does not allow two distinct left asymptotic points with the same image. A 1-block factor code $\pi:X\to Y$ is {\em right resolving} if $\pi(ab)=\pi(ac)$ implies $b=c$ for all $ab,ac$ in $\mathcal{B}_2(X)$. {\em Left closing} and {\em left resolving} maps are defined analogously. A factor code is {\em bi-closing} (resp., {\em bi-resolving}) if it is both right and left closing (resp., resolving). It is easy to see that a right resolving factor code is right closing and a right closing factor code is conjugate to a right resolving factor code.
  
  A word $w$ in $\mathcal{B}(X)$ is said to be {\em synchronizing} if whenever $uw$ and $wv$ are in $\mathcal{B}(X)$, so is $uwv$.  If for some $k\ge0$ every word of length $k$ in $\mathcal{B}(X)$ is synchronizing, then $X$ is called a ({\em $k$-step}) {\em shift of finite type}. Every shift of finite type is conjugate to the {\em edge shift} $\mathsf{X}_G$ over a directed graph $G$, where $G=(\mathcal{V},\mathcal{E})$, a pair of finitely many {\em vertices} $\mathcal{V}$ and {\em (directed) edges} $\mathcal{E}$, determines the shift space $\mathsf{X}_G$ consisting of all bi-infinite paths over $G$. Every edge shift is 1-step. For $\Gamma\subset\mathcal{B}(\mathsf{X}_G)$, denote by $\mathsf{i}_G(\Gamma)$ and by $\mathsf{t}_G(\Gamma)$ the set of the initial vertices and that of the terminal vertices of the paths in $\Gamma$, respectively. A graph is said to be {\em irreducible} if for each pair $I, J$ in $\mathcal{V}$ there is a path from $I$ to $J$ in $G$. A shift of finite type is irreducible if and only if it is conjugate to $\mathsf{X}_G$ for some irreducible graph $G$.
  
  A {\em sofic shift} $Y$ is a factor of a shift of finite type $X$. We call $X$ an {\em extension} of $Y$. Any sofic shift is the image of a 1-block factor code from an edge shift. This 1-block factor from the edges of a graph is called a {\em labelling} map.

\section{Transition class and class degree}\label{sec:class_degrees}
  We review transition classes and class degrees and find useful facts as well as give new proofs of some old results. For more details on class degree, see \cite{AllHJ13,AllQ13}. From now on, let $\pi:X\to Y$ be a factor code from a shift of finite type $X$ over $\mathcal{A}$ onto a sofic shift $Y$, unless stated otherwise.

  \begin{definition}\label{defn:bridge}
    Let $u$ and $w$ be in $\mathcal{B}_l(X)$ for some $l>0$ and $\pi(u)=\pi(w)$. Then a path $v$ in $\mathcal{B}(X)$ is called a {\em bridge} from $u$ to $w$ if $v|_1=u|_1,v|_l=w|_l$ and $\pi(v)=\pi(u)=\pi(w)$. A pair of bridges from $u$ to $w$ and from $w$ to $u$ is called a {\em 2-way} {\em bridge} between $u$ and $w$.
  \end{definition}

  \begin{definition}\label{defn:class-degree}
    Given $m\in\mathbb Z,y$ in $Y$ and $x,x'$ in $X$ with $\pi(x)=\pi(x')$ a {\em right} $m$-{\em bridge} from $x$ to $x'$ is a point $\vec x$ in $X$ such that for some $n>m$ we have
    \[ x|_{(-\infty,m]}=\vec x|_{(-\infty,m]},\vec x|_{[n,\infty)}=x'|_{[n,\infty)}\text{ and }\pi(x)=\pi(x')=\pi(\vec x). \]
    A pair of right $m$-bridges from $x$ to $x'$ and from $x'$ to $x$ is called a {\em 2-way} $m$-{\em bridge} between $x$ and $x'$. A {\em right transition} from $x$ to $x'$ is a sequence $\{\vec x^{(m)}\}_{m\in\mathbb{Z}}$ of $m$-bridges from $x$ to $x'$. When there is a {\em right transition} from $x$ to $x'$, we write that $x\to^rx'$.

    We say that $x$ and $x'$ are {\em right} {\em equivalent} and write that $x\sim^rx'$ if $x\to^rx'$ and $x'\to^rx$. It is indeed an equivalence relation and the equivalence class $[x]^r$ of $x$ up to $\sim^r$ is called a {\em right} {\em (transition) class}. Set $\llbracket y\rrbracket^r=\{[x]^r\mid x\in\pi^{-1}(y)\}$ and $d_\pi^r(y)=|\llbracket y\rrbracket^r|$ for $y$ in $Y$. The pigeonhole principle guarantees that $d_\pi^r(y)$ is always finite (see Theorem 4.9 of \cite{AllQ13}). The {\em right class degree} $d_\pi^r$ of $\pi$ is defined to be $d_\pi^r=\min_{y\in Y}d_\pi^r(y)$.
  \end{definition}

  If $\pi$ is finite-to-one, then $d_\pi^r$ equals the {\em degree} of $\pi$, which is the minimal cardinality of the set of preimages of a point in $Y$. In some cases, the term {\em class} will be omitted, which are justified as class degree generalises degree in a natural way \cite{AllHJ13,AllQ13}. In particular, the following result was established in \cite{AllHJ13,AllQ13}.
  \begin{theorem}\label{thm:previous_results}
    Let $\pi$ be a factor code from an irreducible shift of finite type $X$ onto a sofic shift $Y$. Let $y$ be a right transitive point of $Y$. Then $d_\pi^r(y)=d_\pi^r$. Also $[x]^r$ contains a right transitive point of $X$ for all $x$ in $\pi^{-1}(y)$.
  \end{theorem}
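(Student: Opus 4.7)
The plan is to prove the two claims separately: first the constancy $d_\pi^r(y)=d_\pi^r$ on right transitive points, then the existence of a right transitive representative in each class.

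For the first claim, since $d_\pi^r\le d_\pi^r(y)$ by definition of the minimum, it suffices to show $d_\pi^r(y)\le d_\pi^r(y')$ for every $y'\in Y$. My plan is to exploit the finite combinatorial nature of the class partition: the pigeonhole argument behind the finiteness of $d_\pi^r(y')$ (Theorem 4.9 of \cite{AllQ13}) shows that whether two preimages of $y'$ are right-equivalent is decided by the bridge structure of preimages of a sufficiently long finite sub-block of $y'$, with uniform control since $X$ is of finite type. Because $y$ is right transitive, any such sub-block of $y'$ occurs in $y|_{[0,\infty)}$. Aligning the two blocks via this occurrence and transporting class-separating witnesses across the alignment produces a surjection from $\llbracket y'\rrbracket^r$ onto $\llbracket y\rrbracket^r$ (informally: preimages of $y$ can be bridged at least as freely as those of $y'$), which gives the desired inequality.

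For the second claim, fix $x\in\pi^{-1}(y)$ and enumerate $\mathcal{B}(X)=\{w_1,w_2,\ldots\}$. I would inductively construct $\tilde x\in[x]^r$ such that each $w_i$ occurs in $\tilde x|_{[0,\infty)}$, so that $\tilde x$ is right transitive. Start with $\tilde x|_{(-\infty,0]}=x|_{(-\infty,0]}$. At stage $i$, having defined $\tilde x|_{(-\infty,n_{i-1}]}$ with $\pi(\tilde x)=y$ on that segment, use right transitivity of $y$ to locate a coordinate $p>n_{i-1}$ where $\pi(w_i)$ occurs in $y$; then use irreducibility of $X$ together with the finite-type property to extend $\tilde x$ through a preimage of $y|_{[n_{i-1},p+|w_i|)}$ that routes through $w_i$ at position $p$ and further into a tail lifting $y|_{[n_i,\infty)}$. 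Passing to the limit yields the desired $\tilde x$.

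The main obstacle is ensuring $\tilde x\in[x]^r$, namely producing right $m$-bridges $x\to^r\tilde x$ and $\tilde x\to^r x$ for every $m\in\mathbb Z$. The forward direction is tractable: $\tilde x$ agrees with $x$ on $(-\infty,0]$, and by interleaving the stages one can arrange that for every $m>0$ there is a later coordinate at which $\tilde x$ is again aligned with $x$, which immediately supplies the required right $m$-bridges. The reverse direction $\tilde x\to^r x$ is the delicate point: for each $m$ one needs a point agreeing with $\tilde x$ on $(-\infty,m]$ and with $x$ on a right tail. Securing such a bridge requires arranging, at each stage past coordinate $m$, that the path inserted to visit $w_i$ can be closed against $x$'s eventual behaviour through a synchronizing word of $X$. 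This bookkeeping, combining the finite-type structure of $X$ with the right transitivity of $y$, is the combinatorial heart of the proof.
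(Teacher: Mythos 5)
The paper does not actually prove this theorem: it is quoted from \cite{AllHJ13,AllQ13}, though the first claim is essentially re-derived in \S 3 via depths and t-depths (Proposition~\ref{prop:finitely_many_smaller_depth_than_degree} and Corollary~\ref{cor:min_depths_and_degree}). Measured against that machinery, your argument for the first claim has a genuine gap. Right equivalence of two preimages is \emph{not} decided by the bridge structure over a single sufficiently long sub-block: by definition it requires $2$-way $m$-bridges for \emph{every} $m\in\mathbb{Z}$, so no finite window settles it, and your ``align one occurrence and transport class-separating witnesses'' step does not produce the infinitely many bridges needed. The correct mechanism is: some word $w$ with $d_\pi(w)\le d_\pi^r$ exists in $\mathcal{B}(Y)$ (Theorem 4.22 of \cite{AllQ13}), hence occurs infinitely often in $y|_{[0,\infty)}$ because $y$ is right transitive; then, given $d_\pi^r+1$ preimages of $y$, the pigeonhole principle applied to the routing symbols at these infinitely many occurrences forces some fixed pair to admit $2$-way bridges at arbitrarily large coordinates, hence to be right equivalent, giving $d_\pi^r(y)\le d_\pi^r$. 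Your sketch names the pigeonhole but attaches it to the wrong object (a single block rather than an infinite sequence of occurrences).

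The second claim is in worse shape: you have correctly located the hard step --- securing $2$-way $m$-bridges between $x$ and the constructed $\tilde x$ for all $m$ --- but you leave it unproved, and the tool you propose for it cannot work. An $m$-bridge from $\tilde x$ to $x$ must itself be a preimage of $y$, so the connecting segment is constrained to lie in $\pi^{-1}(y|_{[m,n]})$, not merely in $\mathcal{B}(X)$; irreducibility of $X$ and synchronizing words of $X$ give no control over the fibre and cannot ``close the path against $x$'s eventual behaviour.'' The construction in \cite{AllHJ13} resolves this by fixing a word $w$ of minimal depth with routing set $M$, $|M|=d_\pi$, using that $w$ occurs infinitely often in $y|_{[0,\infty)}$, and arranging that at infinitely many of these occurrences both $x$ and $\tilde x$ are routable through the \emph{same} symbol of $M$; this simultaneously supplies the $2$-way bridges at every level and leaves room between consecutive occurrences to insert the words $w_i$ that make $\tilde x$ right transitive. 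Without some such fibre-level device your induction does not close, so as written the proposal does not establish either claim.
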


  We may consider a {\em left} $m$-bridge from $x$ to $x'$ which is a point $\vec x$ in $X$ such that for some $n<m$ we have
  \[ x'|_{(-\infty,n]}=\vec x|_{(-\infty,n]},\vec x|_{[m,\infty)}=x|_{[m,\infty)}\text{ and }\pi(x')=\pi(x)=\pi(\vec x). \]
  Subsequently we consider a {\em left} {\em transition} $x\to^lx'$ from $x$ to $x'$, a sequence $\{\vec x^{(m)}\}_{m\in\mathbb Z}$ of left $m$-bridges from $x$ to $x'$. Then {\em left} {\em equivalence} $x\sim^lx'$ and the {\em left} {\em class}  $[x]^l$ of $x$ up to $\sim^l$ are considered as well. If for $y$ in $Y$ we put $\llbracket y\rrbracket^l=\{[x]^l\mid x\in\pi^{-1}(y)\}$ and $d_\pi^l(y)=|\llbracket y\rrbracket^l|$, then the {\em left class degree} $d_\pi^l=\min_{y\in Y}d_\pi^l(y)$ of $\pi$ is equal to $d_\pi^r$ \cite[\S 6]{AllHJ13}. Hence we may omit ``left'' or ``right'' in front of {\em class degree} and just denote it by $d_\pi$. However, a point in $Y$ may have distinct numbers of right and left classes, so we will continue to distinguish $d_\pi^r(\cdot)$ from $d_\pi^l(\cdot)$.
  
  The next notion which deals with words in a local scope is helpful for many arguments.
  
  \begin{definition}\label{defn:routable_blocks}
    A word $u$ in $\mathcal{B}_l(X)$ is said to be {\em routable} through $M$ at $n$ for some $1<n<l$ and $M\subset\mathcal{A}$ if there is $v$ in $\mathcal{B}(X)$ with $u|_1=v|_1,u|_l=v|_l,\pi(u)=\pi(v)$ and $v|_n\in M$. A word $w$ in $\mathcal{B}(Y)$ is said to be {\em fibre-routable} through $M$ at $n$ if all the words in $\pi^{-1}(w)$ are routable through $M$ at $n$. Define the {\em depth} $d_\pi(w)$ of $w$ by
    \[ d_\pi(w)=\min_{1<n<l}\min\{|M|\mid w\text{ is fibre-routable through }M\text{ at }n\}. \]
  \end{definition}
  
  If two blocks of the same image under $\pi$ are routable through a common single symbol at the same coordinate, then a 2-way bridge is naturally found there. This leads us to another notion concerning blocks.

  \begin{definition}\label{defn:tangled_blocks}
    Given $w$ in $\mathcal{B}(Y)$, a subset $\mathcal{W}$ of $\pi^{-1}(w)$ is said to be {\em tangled} if between any two words of $\mathcal{W}$ lies a 2-way bridge. A partition of $\pi^{-1}(w)$ each member of which is tangled is said to be a {\em tangled} partition. The {\em t-depth} $\tau_\pi(w)$ of $w$ is the smallest cardinality of a tangled partition of $\pi^{-1}(w)$.
  \end{definition}
  
  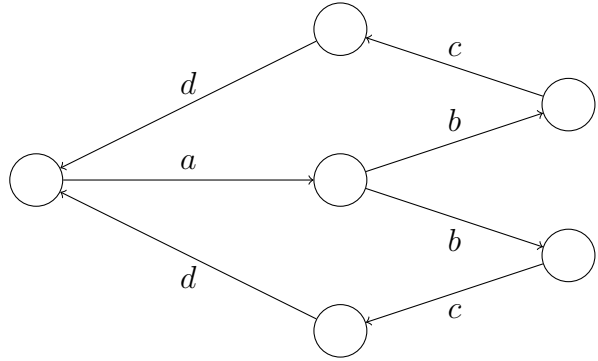
\begin{figure}
    \begin{tikzpicture}[->]
      \node[circle,draw,inner sep=7pt] (I0) at (-2,1) {};
      \node[circle,draw,inner sep=7pt] (I) at (2,1) {};
      \node[circle,draw,inner sep=7pt] (J1) at (5,2) {};
      \node[circle,draw,inner sep=7pt] (J2) at (5,0) {};
      \node[circle,draw,inner sep=7pt] (K1) at (2,3) {};
      \node[circle,draw,inner sep=7pt] (K2) at (2,-1) {};
      
      \draw (I0) to node[above] {$a$} (I);
      \draw (I) to node[above] {$b$} (J1);
      \draw (I) to node[below] {$b$} (J2);
      \draw (J1) to node[above] {$c$} (K1);
      \draw (J2) to node[below] {$c$} (K2);
      \draw (K1) to node[above] {$d$} (I0);
      \draw (K2) to node[below] {$d$} (I0);
    \end{tikzpicture}
    \caption{$\tau_\pi(abcd)=1$ while $d_\pi(abcd)=2$}\label{fig:tangled_and_routable_blocks}
  \end{figure}
  
  \begin{example}\label{eg:tangled_and_routable_blocks}
    Let $\pi$ be the labelling map given in Figure \ref{fig:tangled_and_routable_blocks}. Then the set of the preimages of $w=abcd$ is tangled so that $\tau_\pi(w)=1$ but $d_\pi(w)=2$.
  \end{example}
  
  Beware that $\pi^{-1}(w)$ may admit two distinct tangled partitions of the smallest cardinality. We do not guarantee that $\pi^{-1}(w)$ admits a unique tangled partition which has the smallest cardinality and is ``canonical'' or ``maximal'' in some sense. Even so, in any case $\tau_\pi(w)$ is defined: for every word $w$ in $\mathcal{B}(Y)$ the set of its preimages clearly admits the tangled partition $\{\{u\}\mid u\in\pi^{-1}(w)\}$ of the singletons.
  
  Every extension $uwv$ of a block $w$ in $\mathcal{B}(Y)$ has t-depth and depth no greater than $w$ has. We give an exposition of the case of $\tau_\pi$: let $w$ be in $\mathcal{B}(Y)$ and $\{P_1,\cdots,P_d\}$ a tangled partition of $\pi^{-1}(w)$ where $d={\tau_\pi(w)}$. Given any $u,v$ with $uwv\in\mathcal{B}(Y)$ set $P'_j=\{\alpha\in\pi^{-1}(uwv)\mid\alpha|_{[|u|+1,|u|+|w|]}\in P_j\}$ for $1\le j\le d$. Then for $1\le j\le d$ each $P'_j|_{[|u|+1,|u|+|w|]}$ is contained in $P_j$ and $P'_j$ is tangled.
  
  At first, only the depth of a word was defined in \cite{AllQ13}. The notion of t-depth is better than that of depth in some cases because compactness arguments are applied to the former more naturally. It was shown in \cite[Theorem~4.24]{AllQ13} that $d_\pi=\min_{w\in\mathcal{B}(Y)}d_\pi(w)$. We show that $d_\pi$ equals $\min_{w\in\mathcal{B}(Y)}\tau_\pi(w)$ as well. Throughout the rest of the section, let $\pi$ be a factor code from a shift of finite type $X$ onto an irreducible sofic shift $Y$. Let $y$ be in $Y$ and $d=d_\pi^r(y)$.
 
  \begin{remark}\label{rmk:t_depth_and_depth}
    Let $w$ be in $\mathcal{B}(Y)$ and $l=|w|$. There are $1<n<l$ and $M\subset\mathcal{A}$ with $|M|=d_\pi(w)$ such that $w$ is fibre-routable through a symbol of $M$ at $n$. Set up any order on $M$ and partition the words in $\pi^{-1}(w)$ up to the least symbols of $M$ through which they are routable at $n$. That is, $u$ in $\pi^{-1}(w)$ is classified into $P_a$ if $a$ is the least symbol of $M$ through which $u$ is routable at $n$. For any $a$ in $M$ and two words $u,v$ in $P_a$ there are $u',v'$ with $u'|_n=v'|_n=a,u|_1=u'|_1,u|_l=u'|_l,v|_1=v'|_1$ and $v|_l=v'|_l$. Then $u'|_{[1,n)}av'|_{(n,l]}$ and $v'|_{[1,n)}au'|_{(n,l]}$ form a 2-way bridge between $u$ and $v$. So each $P_a$ is tangled and since $P_a$'s, $a$ in $M$, partition $\pi^{-1}(w)$, we have that $\tau_\pi(w)\le d_\pi(w)$.  
  \end{remark}
  
  \begin{proposition}\label{prop:finitely_many_smaller_depth_than_degree}
    There are only finitely many occurrences of words with depth (resp. t-depth) smaller than $d$ in $y|_{[0,\infty)}$.
  \end{proposition}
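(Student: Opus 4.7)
The plan is to argue by contradiction via a separation constant for the right transition classes over $y$. Since $d = d_\pi^r(y)$, choose representatives $x_1, \ldots, x_d \in \pi^{-1}(y)$, one from each right transition class. For each pair $i \ne j$, $x_i \not\sim^r x_j$ forces the failure of $x_i \to^r x_j$ or $x_j \to^r x_i$ (or both), so the $m$-bridge condition fails in that direction for some $m$, and therefore for every larger $m$ (since an $m'$-bridge with $m' \ge m$ is also an $m$-bridge). Let $M_{ij}$ be the smallest such threshold for the failing direction, and set $M = \max_{i \ne j} M_{ij}$. I claim that every low-t-depth subword $y|_{[a,b]}$ of $y|_{[0,\infty)}$ must satisfy $a \le M$, whence the set of such starting positions lies in $[0, M]$ and is finite.

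To prove the claim, suppose $\tau_\pi(y|_{[a,b]}) = s < d$ with $a > M$, and fix a tangled partition $\mathcal{P}$ of $\pi^{-1}(y|_{[a,b]})$ into $s$ parts. Pigeonhole on the $d$ restrictions $x_\ell|_{[a,b]}$ forces two of them, $x_i|_{[a,b]}$ and $x_j|_{[a,b]}$ with $i \ne j$, into the same class of $\mathcal{P}$, yielding a 2-way block bridge $(v, v')$ between them. Splicing produces
\[
\vec x = x_i|_{(-\infty, a]} \cdot v \cdot x_j|_{[b, \infty)}, \qquad \vec x' = x_j|_{(-\infty, a]} \cdot v' \cdot x_i|_{[b, \infty)},
\]
which lie in $X$ (in the 1-step recoding, the single-symbol agreement of $v, v'$ with $x_i, x_j$ at the splice positions $a$ and $b$ makes the concatenation a legal path) and project to $y$. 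Hence $\vec x$ is a right $a$-bridge from $x_i$ to $x_j$ with $n = b$, and $\vec x'$ is a right $a$-bridge from $x_j$ to $x_i$. Since $a > M \ge M_{ij}$, whichever direction of $\{x_i, x_j\}$ was the failing one now acquires an $a$-bridge, hence an $M_{ij}$-bridge, contradicting the choice of $M_{ij}$.

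The depth version runs identically but constructs the bridge explicitly: $d_\pi(y|_{[a,b]}) < d$ gives a set $M_0 \subset \mathcal{A}$ of size less than $d$ and an interior position $n$ through which every preimage of $y|_{[a,b]}$ is fibre-routable, so pigeonhole on the $d$ restrictions yields $i \ne j$ routable through a common symbol $\sigma \in M_0$; concatenating the two routings across position $n$ supplies the bridge, after which the splicing argument above applies verbatim. Alternatively one reduces the depth version to the t-depth version via $\tau_\pi(w) \le d_\pi(w)$ of Remark \ref{rmk:t_depth_and_depth}. The main obstacle is verifying that the spliced paths really belong to $X$---this uses the 1-step recoding---and the related subtlety is that one must obtain $a$-bridges in \emph{both} directions for the same pair $(i,j)$, which is precisely what the two-way nature of a tangled bridge delivers, so that whichever direction is the failing one will be contradicted.
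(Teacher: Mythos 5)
Your proof is correct and rests on the same engine as the paper's own argument: pigeonhole the restrictions of preimages from the $d$ distinct right classes into a tangled partition of cardinality less than $d$, extract a 2-way bridge, and splice it (in the 1-step/1-block recoding) into point bridges. The only difference is bookkeeping --- the paper runs a second pigeonhole over infinitely many disjoint windows to upgrade block bridges into a full transition for some fixed pair, whereas you observe that inequivalence already fails at a definite coordinate $M_{ij}$, so a single window to the right of $M=\max_{i\ne j}M_{ij}$ yields the contradiction; just note that when no $m$-bridge exists for \emph{any} $m$ there is no smallest failing threshold, so $M_{ij}$ should be taken as an arbitrary failing coordinate rather than the minimal one.
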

  \begin{proof}
    By Remark~\ref{rmk:t_depth_and_depth} it suffices to show that at most finitely many occurrences of words with t-depth smaller than $d$ are allowed in $y|_{[0,\infty)}$. Let us have an infinite sequence $0<m_1<n_1<m_2<n_2<\cdots$ with $w_k=y|_{[m_k,n_k]},d_k=\tau_\pi(w_k),k\in\mathbb{N}$. Let $\mathcal{P}_k$ be a tangled partition of $\pi^{-1}(w_k)$ of cardinality $d_k$ for every $k\in\mathbb{N}$.
    
    Let $d'=\liminf_kd_k$. Take any preimages $x_j,0\le j\le d'$, of $y$. Let $P_{j,k}$ be the element of $\mathcal{P}_k$ which contains $x_j|_{[m_k,n_k]}$, for $k\in\mathbb{N}$. Given $k\in\mathbb{N}$ with $d_k=d'$ there are $0\le i<j\le d'$ with $P_{i,k}=P_{j,k}$ so that we have a 2-way bridge between $x_i|_{[m_k,n_k]}$ and $x_j|_{[m_k,n_k]}$. So there are $0\le i<j\le d'$ such that we have a 2-way bridge between $x_i|_{[m_k,n_k]}$ and $x_j|_{[m_k,n_k]}$ for infinitely many $k$ with $d_k=d'$, i.e., $x_i\sim^rx_j$. Hence, it is impossible to take more than $d'$ preimages of $y$ which are not right equivalent to each other, and $d\le d'$.
  \end{proof}
  
  A {\em recurrent point} is a point any word of which occurs infinitely often to the right. Corollary~\ref{cor:min_depths_and_degree} is immediate from Proposition~\ref{prop:finitely_many_smaller_depth_than_degree}.
  
  \begin{corollary}\label{cor:min_depths_and_degree}
    If $y$ is recurrent, then
    \[ d=\min\{d_\pi(y|_{[m,n]})\mid m\le n\}=\min\{\tau_\pi(y|_{[m,n]})\mid m\le n\}. \]
    Furthermore, if $y$ is right transitive, then
    \[ d_\pi=d=\min\{d_\pi(y|_{[m,n]})\mid m\le n\}=\min\{\tau_\pi(y|_{[m,n]})\mid m\le n\}. \]
  \end{corollary}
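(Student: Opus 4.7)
The plan is to show $d = \min\{\tau_\pi(y|_{[m,n]}) : m\le n\}$ by establishing each of the two inequalities, then to transfer the result to $d_\pi$ using Remark~\ref{rmk:t_depth_and_depth}, and finally to read off the right-transitive case from Theorem~\ref{thm:previous_results}.

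The inequality $\min\{\tau_\pi(y|_{[m,n]})\} \ge d$ is the easy half and follows directly from Proposition~\ref{prop:finitely_many_smaller_depth_than_degree} combined with recurrence. If there were a window $[m_0, n_0]$ with $\tau_\pi(y|_{[m_0, n_0]}) < d$, then setting $w = y|_{[m_0, n_0]}$ the recurrence of $y$ would place infinitely many copies of $w$ in $y|_{[0,\infty)}$, each contributing an occurrence of a word of t-depth $\tau_\pi(w) < d$ and thus violating Proposition~\ref{prop:finitely_many_smaller_depth_than_degree}. So $\tau_\pi(y|_{[m, n]}) \ge d$ for every window; the same argument with $d_\pi$ in place of $\tau_\pi$ yields $d_\pi(y|_{[m,n]}) \ge d$ everywhere.

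The reverse inequality requires exhibiting a window that actually achieves the bound. My plan is to pick representatives $x_1, \ldots, x_d \in \pi^{-1}(y)$ of the $d$ right transition classes and show that for some sufficiently wide window $[m, n]$ with interior coordinate $n'$, the word $y|_{[m, n]}$ is fibre-routable through the set $M = \{x_j|_{n'} : 1\le j\le d\}$, giving $d_\pi(y|_{[m, n]}) \le |M| \le d$ and hence $\tau_\pi(y|_{[m, n]}) \le d$ by Remark~\ref{rmk:t_depth_and_depth}. For each preimage $u$ of $y|_{[m, n]}$ one would produce a completion of $u$ in $\pi^{-1}(y)$, identify the class $[x_j]^r$ containing that completion, and use the right equivalence with $x_j$ to reroute within $[m, n]$ so as to pass through $x_j|_{n'}$. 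The main obstacle is ensuring that the relevant bridges have bounded delay and therefore fit inside a bounded window; this calls for a uniform-separation property for distinct right classes over $y$, which becomes available after passing to the orbit closure $\overline{\{\sigma^k y : k \in \mathbb{Z}\}}$, in which the recurrent point $y$ is right transitive. Combining both inequalities gives $d = \min \tau_\pi(y|_{[m,n]}) = \min d_\pi(y|_{[m,n]})$, and the ``furthermore'' clause follows because Theorem~\ref{thm:previous_results} yields $d_\pi^r(y) = d_\pi^r = d_\pi$ for right transitive $y$.
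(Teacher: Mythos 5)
Your lower bound $\min\tau_\pi(y|_{[m,n]})\ge d$ (and likewise for $d_\pi$ via Remark~\ref{rmk:t_depth_and_depth}) is correct and is exactly how the paper uses Proposition~\ref{prop:finitely_many_smaller_depth_than_degree} together with recurrence, and the ``furthermore'' clause via Theorem~\ref{thm:previous_results} is fine (modulo noting that a right transitive point of an irreducible $Y$ is recurrent). The gap is in the upper bound $\min d_\pi(y|_{[m,n]})\le d$. The paper does not reprove this: it is precisely the content of the Allahbakhshi--Quas results recalled just before Remark~\ref{rmk:t_depth_and_depth} and cited again in \S 5 (every point $y$ contains a word of depth at most $d_\pi^r(y)$, and $d_\pi=\min_w d_\pi(w)$), and the corollary is ``immediate'' only modulo that citation. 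You instead sketch a direct construction, and the sketch does not close.

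Concretely: (1) fibre-routability of $w=y|_{[m,n]}$ quantifies over \emph{all} words in $\pi^{-1}(w)$, but such a word need not extend to a point of $\pi^{-1}(y)$, so the step ``produce a completion of $u$ in $\pi^{-1}(y)$ and identify its class'' is unavailable for exactly the preimage words that make this theorem hard. (2) Even for genuine preimage points, right equivalence to a representative $x_j$ produces $m$-bridges whose right endpoint $n$ is not uniformly controlled; mutual or uniform separation of \emph{distinct} classes bounds nothing about bridges \emph{within} a class, so it does not justify ``rerouting within $[m,n]$''. A separate pigeonhole or compactness argument on routing sets is what is actually needed. (3) Passing to the orbit closure of $y$ replaces $X$ by the preimage of that orbit closure, which need not be an irreducible shift of finite type, so the separation results you invoke (stated for irreducible domains) do not transfer. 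The intended repair is simply to quote the existence of a word of depth at most $d_\pi^r(y)$ occurring in $y$, after which $\tau_\pi\le d_\pi$ from Remark~\ref{rmk:t_depth_and_depth} finishes both equalities.
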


\section{Class-closing factor codes}\label{sec:closing_factors}
  We introduce the notion of class-closing property. Its equivalent conditions and several properties are found. The main result in the present section is the following theorem.
    
  \begin{theorem}\label{thm:class-closing_is_continuing}
    Let $\pi$ be a factor code from an irreducible shift of finite type $X$ onto another shift of finite type $Y$. If $\pi$ is right class-closing, then it is right continuing.
  \end{theorem}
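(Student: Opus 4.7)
The plan is to combine the delay form of right class-closing furnished by Theorem \ref{thm:class-closing_conditions} with a compactness argument tailored to the $Y$-is-SFT hypothesis. I would first recode so that $\pi$ is $1$-block, $X$ is an edge shift on an irreducible graph, and $Y$ is $1$-step, so that membership of a bi-infinite sequence in $Y$ becomes a purely local condition. The property I expect that theorem to supply is a constant $N$ such that any two preimages of a common word $w \in \mathcal{B}(Y)$ which share their initial symbol admit a 2-way bridge over $w$, thus controlling how far apart preimages of a long word can wander before they can be sewn together.

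To establish right continuing I would reduce to a block-lifting problem: given $x \in X$ and $y \in Y$ with $\pi(x)|_{(-\infty,0]} = y|_{(-\infty,0]}$, it suffices by compactness to construct, for each $m \ge 1$, a word $a_m \in \mathcal{B}_m(X)$ with $x|_0\, a_m \in \mathcal{B}(X)$ and $\pi(a_m) = y|_{[1,m]}$. A subsequential limit in $X$ of bi-infinite points completing the prefix $x|_{(-\infty,0]}\, a_m$ (themselves obtainable by irreducibility of $X$) then produces the desired $x' \in X$ with $\pi(x') = y$ and $x'|_{(-\infty,0]} = x|_{(-\infty,0]}$, where the 1-step structure of $Y$ ensures $\pi(x') \in Y$.

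For the block lift I would pick, by surjectivity, a preimage $u \in \mathcal{B}_{m+1}(X)$ of $\pi(x|_0)\, y|_{[1,m]}$; the only obstruction is that $u|_1$ need not coincide with $x|_0$. To correct this I would reach back into the past of $x$: for $L \gg N$, form the two preimages $x|_{[-L,0]}\, u|_{[2,m+1]}$ and $u'\, u|_{[2,m+1]}$ of the word $\pi(x|_{[-L,0]})\, y|_{[1,m]}$, where $u' \in \mathcal{B}_{L+1}(X)$ is an auxiliary preimage of $\pi(x|_{[-L,0]})$ ending at $u|_1$. The delay property then furnishes a 2-way bridge over $\pi(x|_{[-L,0]})\, y|_{[1,m]}$ between these two preimages, and the portion of the bridge on coordinates $[1,m]$ provides the required $a_m$, since by construction it extends $x|_0$ in $X$ and has image $y|_{[1,m]}$.

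The main obstacle is the existence of the auxiliary preimage $u'$ of $\pi(x|_{[-L,0]})$ ending at the prescribed vertex determined by $u|_1$; this is a realisability statement about labelings in the graph of $X$ that does not follow from surjectivity of $\pi$ alone. I expect it to be settled by a further application of the class-closing delay together with the irreducibility of $X$, showing that for $L$ sufficiently large, every symbol in $\pi^{-1}(\pi(x|_0))$ occurs as the terminal of some preimage of $\pi(x)|_{[-L,0]}$. It is here that $Y$ being of finite type, rather than merely sofic, is essential: the image-compatibility across the bridge and with the forward block reduces to verifying a local constraint on each $1$-block of $Y$, which the 2-way bridge structure inherited from class-closing automatically respects. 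Combining these ingredients yields the required block lifts and hence the right continuing property of $\pi$.
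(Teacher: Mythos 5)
There is a genuine gap, and it surfaces in several places at once. First, the word $x|_{[-L,0]}\,u|_{[2,m+1]}$ you form is not known to lie in $\mathcal{B}(X)$: concatenability requires $\mathsf{t}_G(x|_0)=\mathsf{i}_G(u|_2)=\mathsf{t}_G(u|_1)$, and $u|_1\ne x|_0$ is exactly the obstruction you set out to remove, so this step is circular. Second, even granting both preimages, a $2$-way bridge between two preimages of a word only matches them at their \emph{first} and \emph{last} edges; nothing forces the bridge to pass through $x|_0$, or through an edge terminating at $\mathsf{t}_G(x|_0)$, at coordinate $0$, so its restriction to $[1,m]$ need not extend $x|_0$ and cannot serve as $a_m$. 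Third, the auxiliary claim you flag as the main obstacle --- that for large $L$ every edge of $\pi^{-1}(\pi(x|_0))$ is the terminal edge of some preimage of $\pi(x|_{[-L,0]})$ --- is false in general: the set of terminal edges of preimages of $y|_{[-L,0]}$ is non-increasing in $L$ and typically stabilises to a proper subset of $\pi^{-1}(y|_0)$ (an edge labelled $y|_0$ all of whose predecessors carry a label different from $y|_{-1}$ already defeats it, and such configurations are compatible with right class-closing). Finally, your argument never uses the hypothesis that $Y$ is of finite type in a load-bearing way (the limit point satisfies $\pi(x')=y\in Y$ automatically, with no appeal to $1$-step structure), yet Example~\ref{eg:class-closing_but_not_continuing} exhibits a bi-class-closing code onto a strictly sofic image that is not right continuing; an argument indifferent to this hypothesis cannot be correct. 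A smaller point: your reduction demands a preimage of $y$ agreeing with $x$ on all of $(-\infty,0]$, which is stronger than right continuing requires and need not hold; one should only aim for agreement on $(-\infty,-D]$ where $D$ is the delay.

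The missing idea is the subset construction. The paper passes to a sink $\bar G$ of the subset graph, on which the induced labelling $\bar\pi$ is right resolving; since $Y$ is of finite type, $\bar\pi$ is right continuing --- this is precisely where the SFT hypothesis enters. One tracks $x$ by a point $\bar x$ of $\mathsf{X}_{\bar G}$ with $\mathsf{t}_G(x|_i)\in\mathsf{t}_{\bar G}(\bar x|_i)$, continues $\bar x$ to a $\bar\pi$-preimage $\bar x'$ of $y$ left asymptotic to $\bar x$, and extracts from $\bar x'$ an honest preimage $x'\in\pi^{-1}(y)$ with $\mathsf{t}_G(x'|_i)\in\mathsf{t}_{\bar G}(\bar x'|_i)$. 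Now $x|_{[-D,0]}$ and $x'|_{[-D,0]}$ both lie in the right accessible set of paths $\bar x|_{[-D,0]}=\bar x'|_{[-D,0]}$, so condition (5) of Theorem~\ref{thm:class-closing_conditions} makes them tangled, and a bridge $w$ from the former to the latter yields the legal point $x|_{(-\infty,-D-1]}\,w\,x'|_{[1,\infty)}\in\pi^{-1}(y)$ left asymptotic to $x$. It is this prior construction of a genuine bi-infinite preimage $x'$ of $y$ sharing an accessible set with $x$ near coordinate $0$ that your proposal lacks; bridging two finite words chosen ad hoc cannot substitute for it.
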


  We start with a definition and an example.

  \begin{definition}\label{defn:class-closing}
    Let $\pi$ be a factor code from an irreducible shift of finite type $X$ onto a sofic shift $Y$. If any two left (resp., right) asymptotic points $x$ and $x'$ in $X$ with $\pi(x)=\pi(x')$ are right (resp., left) equivalent, then $\pi$ is said to be {\em right} (resp., {\em left}) {\em class-closing}. If $\pi$ is right and left class-closing at the same time, then it is said to be {\em bi-class-closing}.
  \end{definition}
  
  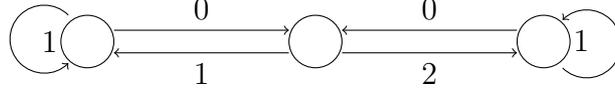
\begin{figure}
    \begin{tikzpicture}[->]
      \node[circle,draw,inner sep=7pt] (I) at (0,1) {};
      \node[circle,draw,inner sep=7pt] (J) at (3,1) {};
      \node[circle,draw,inner sep=7pt] (K) at (6,1) {};
      
      \draw (0.35,1.15) to (1.5,1.15) node[above] {$0$} to (2.65,1.15) ;
      \draw (2.65,0.85) to (1.5,0.85) node[below] {$1$} to (0.35,0.85) ;
      \draw (5.65,1.15) to (4.5,1.15) node[above] {$0$} to (3.35,1.15) ;
      \draw (3.35,0.85) to (4.5,0.85) node[below] {$2$} to (5.65,0.85) ;
      \draw (6.25,0.65) arc (225:495:0.45) node[below right] {$1$};
      \draw (-0.25,1.35) arc (45:315:0.45) node[above left] {$1$};
    \end{tikzpicture}
    \caption{Right class-closing but not left class-closing factor code}\label{fig:class-closing}
  \end{figure}
  
  Note that two left asymptotic points are automatically left equivalent so that we only need to consider their right equivalence. The class-closing condition is slightly weaker than closing one in that the latter require two asymptotic points with the same image to be actually equal. Any right closing factor code which is not left closing is also right class-closing but not left class-closing. For a concrete example, see Example~\ref{eg:class-closing}.
  
  \begin{example}\label{eg:class-closing}
    The factor code given in Figure~\ref{fig:class-closing} as the labelling map on the edges is not left class-closing since it has two right asymptotic preimages of ${}^\infty101^\infty$ which are not left equivalent. It is, however, right resolving.
  \end{example}

  We find some equivalent conditions for class-closing property. Let $G=(\mathcal{V},\mathcal{E})$ be an irreducible graph. Let $X=\mathsf{X}_G$ and $Y=\pi(X)$ where $\pi$ is a labelling map on $\mathcal{E}$. For $\mathcal{I}\subset\mathcal{V}$ and $v$ in $\mathcal{B}(Y)$, let $\mathsf{t}_{G,\mathcal{I}}(v)=\mathsf{t}_G\{w\in\pi^{-1}(v)\mid\mathsf{i}_G(w)\in\mathcal{I}\}$. A subset $\mathcal{J}$ of $\mathcal{V}$ is said to be  {\em right accessible} from a vertex $I$ in $\mathcal{V}$ via a word $v$ in $\mathcal{B}(Y)$ if $\mathcal{J}\subset\mathsf{t}_{G,\{I\}}(v)$. A set $\Gamma$ of paths with the same label over $G$ is said to be  {\em right accessible} from $I$ if $\mathsf{i}_G(\Gamma)$ is right accessible from $I$. Symmetrically we define {\em left accessible} vertices and paths. Accessible sets are based on a similar idea as {\em compatible} sets which Nasu introduced to encode bi-closing factor codes between shifts of finite type to bi-resolving ones \cite{Nas83}.
  
  \begin{theorem}\label{thm:class-closing_conditions}
    Let $G=(\mathcal{V},\mathcal{E})$ be an irreducible graph. Let $X=\mathsf{X}_G$ and $Y=\pi(X)$ where $\pi$ is a labelling map on $\mathcal{E}$. Then the following are equivalent:
    \begin{enumerate}
      \item\label{itm:class-closing} $\pi$ is right class-closing.
      \item\label{itm:class_mutual_separation} Any two points from distinct right classes over the same point in $Y$ are mutually separated.
      \item\label{itm:left_class_in_right_class} $[x]^l\subset[x]^r$ for all $x$ in $X$. Equivalently, any right class is a union of left classes.
      \item\label{itm:vertex_delay} We have an integer $D\ge0$ such that the paths of length $D+1$ outgoing from any single vertex of $G$ are tangled if they have the same label.
      \item\label{itm:access_delay} We have an integer $D\ge0$ such that the paths of length $D+1$ right accessible from any single vertex of $G$ are tangled if they have the same label.
    \end{enumerate}
    Furthermore, $D$ found in (\ref{itm:vertex_delay}) satisfies the role of $D$ in (\ref{itm:access_delay}) and the vice versa.
  \end{theorem}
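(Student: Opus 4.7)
I would prove the equivalences by establishing the cycle $(1) \Leftrightarrow (2) \Leftrightarrow (3)$ among the structural conditions and $(5) \Leftrightarrow (4) \Leftrightarrow (1)$ among the delay-type conditions, with $(5)$ serving as the bridge. The implications $(5) \Rightarrow (4)$ (trivially, with the same $D$) and $(5) \Rightarrow (1)$ (directly, via splicing) are straightforward, whereas $(4) \Rightarrow (5)$ with the same $D$ and $(1) \Rightarrow (5)$ require more care.

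\textbf{Structural equivalences.} For $(1) \Rightarrow (3)$, given $x \sim^l x'$, each left $m$-bridge $\vec{x}^{(m)}$ from $x$ to $x'$ is left asymptotic to $x'$ and right asymptotic to $x$; right class-closing gives $\vec{x}^{(m)} \sim^r x'$, while right-asymptoticness gives $\vec{x}^{(m)} \sim^r x$ for free, yielding $x \sim^r x'$. For $(1) \Rightarrow (2)$, if $x, x'$ in distinct right classes over the same $y$ shared a coordinate $i$, the spliced point $x|_{(-\infty, i]} x'|_{(i, \infty)}$ would be left asymptotic to $x$ and right asymptotic to $x'$, forcing $x \sim^r x'$: contradiction. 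The converses $(3) \Rightarrow (1)$ and $(2) \Rightarrow (1)$ are immediate, since left asymptotic preimages of the same point are automatically left equivalent, and preimages that are not mutually separated share a coordinate.

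\textbf{From $(5)$ to $(1)$ and to $(4)$.} For $(5) \Rightarrow (1)$, let $x, x' \in X$ be left asymptotic with common image agreeing on $(-\infty, N]$ and set $I = \mathsf{t}_G(x|_N)$. For each $m > N$, both $\mathsf{t}_G(x|_m)$ and $\mathsf{t}_G(x'|_m)$ lie in $\mathsf{t}_{G, \{I\}}(\pi(x|_{[N+1, m]}))$, so $(5)$ applied to $x|_{[m+1, m+D+1]}$ and $x'|_{[m+1, m+D+1]}$ yields a bridge $\mu$ of length $D+1$ starting with edge $x|_{m+1}$ and ending with edge $x'|_{m+D+1}$; then $x|_{(-\infty, m]}\, \mu\, x'|_{[m+D+2, \infty)}$ is a right $m$-bridge from $x$ to $x'$, and symmetrically in the other direction, giving $x \sim^r x'$. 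For $(5) \Rightarrow (4)$, take $v$ to be the empty word so that $\mathsf{t}_{G, \{I\}}(v) = \{I\}$; the two conditions then coincide with the same $D$.

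\textbf{From $(4)$ to $(5)$, from $(1)$ to $(5)$, and the main obstacle.} A simple splicing argument first extends $(4)$: any two paths of length $\ell \geq D+1$ outgoing from a single vertex with the same label are tangled. For $(4) \Rightarrow (5)$ with the same $D$, given $\gamma_1, \gamma_2$ as in $(5)$ starting at $J_1, J_2 \in \mathsf{t}_{G, \{I\}}(v)$ with prefixes $\delta_1, \delta_2$ from $I$ labelled $v$, extended $(4)$ applied to $\delta_1 \gamma_1$ and $\delta_2 \gamma_2$ produces a bridge of length $|v| + D + 1$ whose last $D+1$ edges form a path with terminal edge $\gamma_2|_{D+1}$; the bridge between $\gamma_1$ and $\gamma_2$ itself is extracted by further tangling arguments involving the splitting vertex of this bridge. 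For $(1) \Rightarrow (5)$, I argue by contradiction and compactness: if $(5)$ failed for every $D$, the bad configurations $(I_D, v_D, \gamma_1^{(D)}, \gamma_2^{(D)})$ can be embedded in bi-infinite preimages $p^{(D)}, q^{(D)} \in X$ with common past ending at $I_D$ and coincident images, and a subsequential limit produces left asymptotic preimages of some $y \in Y$ that are not right equivalent, contradicting $(1)$. The main obstacle is constructing these bi-infinite preimages so that their images coincide in the future, which requires synchronising the right follower sets of $\mathsf{t}_G(\gamma_i^{(D)})$ in $X$; irreducibility of $X$ provides a path between these vertices whose label can be prepended to align the forward extensions. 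A related subtlety is preserving the same $D$ in $(4) \Rightarrow (5)$, where the splitting vertex of the extended-$(4)$ bridge need not equal $J_1$, so additional tangling of shorter segments must be invoked to adjust.
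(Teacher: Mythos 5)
Your structural cluster $(1)\Leftrightarrow(2)\Leftrightarrow(3)$ and the implications $(5)\Rightarrow(4)$, $(5)\Rightarrow(1)$ are correct and essentially coincide with what the paper does inside its single cycle $(5)\Rightarrow(2)\Rightarrow(3)\Rightarrow(1)\Rightarrow(4)\Rightarrow(5)$. The two remaining implications are where all the work lies, and in both cases the ``obstacle'' you flag is real while the resolution you propose does not hold up. For $(4)\Rightarrow(5)$: applying your extended $(4)$ to $\delta_1\gamma_1$ and $\delta_2\gamma_2$ yields a bridge $\nu$ with $\nu|_1=\delta_1|_1$ and last edge $\gamma_2|_{D+1}$, but nothing forces $\nu$ to pass through $\mathsf{i}_G(\gamma_1)$ at time $|v|$, and ``tangling of shorter segments'' is not something $(4)$ provides (it says nothing about paths of length less than $D+1$), while tangling of segments based at later times would require accessibility from a common vertex, i.e.\ statement $(5)$ itself --- the adjustment is circular as described. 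What actually closes this step is an induction on $|v|$: the paper applies the inductive hypothesis to the shifted windows $\delta_1|_{|v|}\gamma_1|_{[1,D]}$ and $\delta_2|_{|v|}\gamma_2|_{[1,D]}$ (accessible via $v|_{[1,|v|-1]}$) and exploits that the resulting bridge \emph{starts with the edge} $\delta_1|_{|v|}$, so its tail issues from the single vertex $\mathsf{i}_G(\gamma_1)$, where $(4)$ can be applied once more. (Equivalently one can peel one edge of $\delta_1$ off at a time, invoking extended $(4)$ at the common vertex guaranteed by the previous bridge's first edge.) Without this control of the first edge at every stage, your extraction does not go through.

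For $(1)\Rightarrow(5)$, the compactness strategy is right but the specific fix for your ``main obstacle'' is wrong: an irreducibility path from $\mathsf{t}_G(\gamma_1^{(D)})$ to $\mathsf{t}_G(\gamma_2^{(D)})$ inserted into one forward extension shifts its label relative to the other, so it cannot make the two images coincide. The difficulty evaporates if you take limits of the finite bad configurations \emph{before} building bi-infinite points: fix by pigeonhole the base vertex and the pair of initial vertices of the $\gamma^{(D)}_i$, pass to a subsequence along which $\gamma_1^{(D)},\gamma_2^{(D)}$ converge to right-infinite paths that automatically carry the same label, prepend one fixed pair of accessing paths and a common left-infinite tail, and check that the nonexistence of finite bridges survives the limit. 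This is also why the paper proves $(1)\Rightarrow(4)$ rather than $(1)\Rightarrow(5)$: negating $(4)$ produces two paths out of a \emph{common} vertex, so no accessing words intervene, the left-asymptotic pair is immediate, and no synchronisation of futures is ever needed.
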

  \begin{proof}
    (\ref{itm:access_delay})$\implies$(\ref{itm:class_mutual_separation}): Assume (\ref{itm:access_delay}). It suffices to show that if two points with the same image of $X$ meet each other at least once then they are right equivalent. Let two points $x$ and $x'$ in $X$ meet each other and have the same image $y$. We may assume that $x|_0=x'|_0=e$ without loss of generality. For any $n>0$ two paths $x|_{[n,n+D]}$ and $x'|_{[n,n+D]}$ are right accessible from $\mathsf{t}_G(e)$ via $y|_{[1,n-1]}$. By (\ref{itm:access_delay}) we have a 2-way right transition between $x$ and $x'$ which is just a sequence of 2-way bridges between $x|_{[n,n+D]}$ and $x'|_{[n,n+D]}$.
    
    (\ref{itm:class_mutual_separation})$\implies$(\ref{itm:left_class_in_right_class}): Assume that $[x]^l\setminus[x]^r$ is nonempty and contains a point $x'$ for some $x$ in $X$. We have $n<0$ and a symbol $a$ in $\mathcal{A}$ such that $x$ and $x'$ are routable through $a$ at $n$. Taking routed points if necessary, we may assume that $x|_n=x'|_n=a$. That is, $x$ and $x'$ are not mutually separated with $x'\not\in[x]^r$.
    
    (\ref{itm:left_class_in_right_class})$\implies$(\ref{itm:class-closing}): Suppose that $\pi$ is not right class-closing. Then there are two left asymptotic points $x$ and $x'$ with $\pi(x)=\pi(x')$ but which are not right equivalent. However they are left equivalent as are left asymptotic. That is, $x'\in[x]^l\setminus[ x]^r$ and $[x]^l\not\subset[ x]^r$.
    
    (\ref{itm:class-closing})$\implies$(\ref{itm:vertex_delay}): Suppose that (\ref{itm:vertex_delay}) does not hold. Then for any $D\in\mathbb{N}$ there is a vertex $I$ in $\mathcal{V}$ such that we can find paths $\alpha_D$ and $\beta_D$ of length $D$ outgoing from $I$ with $\pi(\alpha_D)=\pi(\beta_D)$ but without a 2-way bridge between them. Because $\mathcal{V}$ is finite there is a vertex $I$ such that for all $D$ we can find such outgoing paths $\alpha_D$ and $\beta_D$. Without loss of generality we may say that for infinitely many $D\in\mathbb{N}$ there is no bridge from $\alpha_D$ to $\beta_D$. By compactness of $X$ there are two right infinite paths $\alpha$ and $\beta$ outgoing from $I$ such that $\pi(\alpha)=\pi(\beta)$ and there is no bridge from $\alpha$ to $\beta$. Take a left infinite path $\lambda$ of $X$ terminating at $I$ and set two left asymptotic points $x=\lambda\alpha$ and $x'=\lambda\beta$. Then $x\not\to^rx'$. Hence $\pi$ is not right class-closing.
    
    (\ref{itm:vertex_delay})$\implies$(\ref{itm:access_delay}): Find $D$ satisfying (\ref{itm:vertex_delay}). We use the induction. The first step is for any two paths of length $D+1$ over $G$ which are labeled the same and are right accessible from a vertex of $G$ via the empty word $\varepsilon$. The case is just a restatement of (\ref{itm:vertex_delay}) in terms of accessibility.
    
    Next, assume that (\ref{itm:access_delay}) holds for any two paths of length $D+1$ over $G$, both labeled the same and right accessible from a vertex of $G$ via a word in $\mathcal{B}_n(Y)$ for some $n\in\mathbb{Z}^+$. We will show that the same holds for the paths right accessible via a word in $\mathcal{B}_{n+1}(Y)$.
    
    Choose over $G$ two paths $\alpha$ and $\beta$ of length $D+1$ labeled $w$ and right accessible from $I$ in $\mathcal{V}$ via $u$ in $\mathcal{B}_{n+1}(Y)$. There are paths $\gamma$ and $\kappa$ labeled $u$ from $I$ to $\mathsf{i}_G(\alpha)$ and to $\mathsf{i}_G(\beta)$, respectively, over $G$. Put $\alpha'=\gamma|_{n+1}\alpha|_{[1,D]}$ and $\beta'=\kappa| _{n+1}\beta|_{[1,D]}$. Since $\alpha'$ and $\beta'$ are labeled the same word $u|_{n+1}w|_{[1,D]}$ and right accessible from $I$ via $u|_{[1,n]}$, the induction assumption gives a bridge $\vec\alpha'$ from $\alpha'$ to $\beta'$. The vertex $\mathsf{i}_G(\alpha)=\mathsf{t}_G(\alpha'|_1)=\mathsf{t}_G(\vec\alpha'|_1)=\mathsf{i}_G(\vec\alpha'|_2)$ has two outgoing paths $\alpha$ and $\vec\alpha'|_{[2,D]}\beta|_{[D,D+1]}$ of the same label $w$. Applying (\ref{itm:vertex_delay}) to them we obtain a bridge from $\alpha$ to $\vec\alpha'|_{[2,D]}\beta|_{[D,D+1]}$, that is, a path in $\pi^{-1}(w)$ starting with $\alpha|_1$ and ending with $\beta|_{D+1}$. The path is a bridge from $\alpha$ to $\beta$ as well. Similarly we have a bridge from $\beta$ to $\alpha$. The proof is complete.
  \end{proof}
  
  \begin{remark}\label{rmk:class_separation}
    The mutual separation in (\ref{itm:class_mutual_separation}) is not a conjugacy invariant. Even if $x$ and $x'$ in $X$ are mutually separated and $\phi$ is a conjugacy from $X$, it may happen that $\phi(x)$ and $\phi(x')$ are not mutually separated. To address this flaw, let $d$ be a metric on $X$ which gives the same product topology as before. It does exist in any case \cite{LinM95}. Now, if there is $\delta>0$ such that $d(x,x')>\delta$ for any two points $x,x'$ in the distinct right (resp., left) classes, then $\pi$ is said to {\em separate right} (resp., {\em left}) {\em classes}. This notion turns out to be invariant under conjugacy and, together with (\ref{itm:class_mutual_separation}) and (\ref{itm:left_class_in_right_class}), leads to Corollary~\ref{cor:separate_right_and_left_classes}.
  \end{remark}
  
  \begin{corollary}\label{cor:separate_right_and_left_classes}
    Let $\pi$ be a factor code from an irreducible shift of finite type $X$ onto a sofic shift $Y$. Then $\pi$ is right class-closing if and only if $\pi$ separates right classes if and only if $[x]^l\subset[x]^r$ for all $x$ in $X$. Also $\pi$ is bi-class-closing if and only if $[x]^l=[x]^r$ for all $x$ in $X$.
  \end{corollary}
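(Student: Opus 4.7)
The plan is to build on Theorem~\ref{thm:class-closing_conditions}, which already supplies the equivalence between being right class-closing and satisfying $[x]^l\subset[x]^r$ for every $x\in X$ (via its items~(\ref{itm:class-closing}) and (\ref{itm:left_class_in_right_class})). Both of those properties are expressed purely in terms of right/left transition classes and fibres of $\pi$, so they are conjugacy-invariant; thus I first recode $\pi$ to a 1-block labelling map $\tilde\pi:\mathsf{X}_G\to Y$ via a conjugacy $\phi:X\to\mathsf{X}_G$, apply Theorem~\ref{thm:class-closing_conditions} to $\tilde\pi$, and push the equivalence back to $\pi$. The substantive new content of the corollary is therefore to insert "$\pi$ separates right classes" into this chain of equivalences.

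For the forward direction I would assume $\pi$ is right class-closing. After the recoding, Theorem~\ref{thm:class-closing_conditions}(\ref{itm:class_mutual_separation}) says that any two points of $\mathsf{X}_G$ in distinct right classes over the same image are mutually separated, and hence in particular differ at coordinate $0$; in the standard product metric this forces their distance to exceed a fixed positive constant, so $\tilde\pi$ separates right classes. Since $\phi$ and $\phi^{-1}$ are homeomorphisms between compact metric shift spaces, both are uniformly continuous, and a standard uniform-continuity computation converts the uniform separation in $\mathsf{X}_G$ into a uniform $\delta>0$ on $X$. This also verifies in passing the conjugacy-invariance of "separates right classes" announced in Remark~\ref{rmk:class_separation}.

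For the reverse direction I would argue contrapositively. Assuming $\pi$ is not right class-closing, the equivalence already in hand produces $x,x'\in X$ with $\pi(x)=\pi(x')$, $x|_{(-\infty,N]}=x'|_{(-\infty,N]}$ for some $N$, and $x'\notin[x]^r$. Because right transitions are preserved by the shift (a right $(m-k)$-bridge between $x$ and $x'$ shifts to a right $m$-bridge between $\sigma^{-k}(x)$ and $\sigma^{-k}(x')$), the points $\sigma^{-k}(x)$ and $\sigma^{-k}(x')$ lie in distinct right classes over the common image $\sigma^{-k}(\pi(x))$ for every $k\ge 0$. Meanwhile they agree on $(-\infty,N+k]$, so their distance in any compatible metric on $X$ tends to $0$, ruling out uniform separation. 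The bi-class-closing equivalence is then an immediate byproduct: bi-class-closing is right and left class-closing combined, which by the equivalence just proved and its time-reversed analogue gives $[x]^l\subset[x]^r$ together with $[x]^r\subset[x]^l$, i.e.\ equality; the converse inclusion-to-property implication is trivial. The only real care needed anywhere is the shift-invariance of right equivalence in the reverse step; everything else is bookkeeping once Theorem~\ref{thm:class-closing_conditions} is granted.
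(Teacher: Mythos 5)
Your argument is correct and follows exactly the route the paper intends: the paper derives this corollary from items (\ref{itm:class_mutual_separation}) and (\ref{itm:left_class_in_right_class}) of Theorem~\ref{thm:class-closing_conditions} together with the conjugacy-invariance of ``separates right classes'' asserted in Remark~\ref{rmk:class_separation}, and your recoding-plus-uniform-continuity step and the shifted-left-asymptotic-pair contrapositive are precisely the details the paper leaves unwritten. No gaps.
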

  
  The smallest $D$ satisfying (\ref{itm:vertex_delay}) or (\ref{itm:access_delay}) in Theorem~\ref{thm:class-closing_conditions} is called the {\em delay} of $\pi$. When $
  \pi$ is a finite-to-one closing factor code from an irreducible shift of finite type, its class-closing delay equals its {\em closing delay} \cite[\S5.1]{LinM95}. If $\pi$ has delay $0$ then it is right resolving. Also note that for any factor code which is not necessarily right class-closing, the condition (\ref{itm:class_mutual_separation}) holds for right transitive points, which are ``typical'' points, of $Y$ \cite{AllHJ13}.

  We recall another generalisation of closing property given in \cite{BoyT84}, called {\em continuing} property.

  \begin{definition}\label{defn:continuing}
    Let $\pi$ be a factor code from a shift space $X$ onto $Y$. If for any points $x$ in $X$ and $y$ in $Y$ left (resp., right) asymptotic to $\pi(x)$ there is a preimage of $y$ left (resp., right) asymptotic to $x$, then $\pi$ is said to be {\em right} (resp., {\em left}) {\em continuing}. If $\pi$ is right and left continuing at the same time, then it is said to be {\em bi-continuing}.
  \end{definition}
  
  The property is invariant under conjugacy. Theorem~\ref{thm:class-closing_is_continuing} generalises a well-known fact that a right closing finite-to-one factor code between irreducible shifts of finite type is right continuing. The fact itself is used in the proof of the result.
  
  For the proof of the theorem, we construct $\bar{G}$ from $G$ and a labelling map $\bar\pi$ on it using the {\em subset construction} as follows: the vertices of $\bar{G}$ are set to be the nonempty subsets of $\mathcal{V}$. Let $\bar{I}$ and $\bar{J}$ be in $\bar{\mathcal{V}}=\mathcal{V}(\bar{G})$. Put an edge $\bar{e}$ from $\bar{I}$ to $\bar{J}$ and set $\bar\pi(\bar{e})=a$ if $\bar{J}=\mathsf{t}_{G,\bar{I}}(a)$. Name the new labelling map on $\bar{G}$ as $\bar\pi$. Then $\bar\pi$ is right resolving and $\bar\pi(\mathsf{X}_{\bar{G}})=\pi(\mathsf{X}_G)=Y$.
  
  The notion of {\em sink} makes things simpler in the proof. An irreducible subgraph $H$ of $\bar{G}$ is a {\em sink} if any outgoing edge from a vertex of $H$ is an edge of $H$ and its terminal vertex again belongs to $H$. Choose any $I$ in $\mathcal{V}$ and let $H$ be a sink of $\bar{G}$ which can be reached from $\{I\}$ in $\bar{\mathcal{V}}$ over $\bar{G}$. It exists, is irreducible and $\bar\pi(\mathsf{X}_H)=Y$, always. Note that for any $\bar{I}$ in $\mathcal{V}(H)$ there is $u$ in $\mathcal{B}(G)$ with $\bar{I}=\mathsf{t}_{G,\{I\}}(u)$ and that a path from $\bar{I}$ to $\bar{J}$ in $\mathcal{V}(H)$ over $\bar{G}$ may be regarded as the set of paths from a subset of $\bar{I}\subset\mathcal{V}$ onto $\bar{J}\subset\mathcal{V}$ over $G$. Refer to \cite[\S3.3,\S4.4]{LinM95} for detailed expositions about the subset construction and sink.

  \begin{figure}
    \begin{tikzpicture}[->,to path={.. controls +(1,0) and +(-0.7,0) .. (\tikztotarget) \tikztonodes}]
      \node[coordinate,label=$I$] (I) at (0,1) {};
      \node[coordinate] (I0) at (0,2) {};
      \node[coordinate] (I1) at (0,3) {};
      \node[ellipse,fit=(I) (I0) (I1),draw,inner sep=8pt,label=$\bar{I}$] {};
      
      \node[coordinate,label=$J$] (J) at (3,1.5) {};
      \node[coordinate] (J0) at (3,3) {};
      \draw (I) to node[above] {$v$} (J);
      \draw (I0) to node[above] {\small{$v'\in\pi^{-1}(\pi(v))$}} (J0);
      \node[ellipse,fit=(J) (J0),draw,inner sep=8pt] {};
      
      \node[coordinate,label=below right:$\mathsf{t}_G(x_m)$] (L) at (5,1) {};
      \node[coordinate] (L0) at (5,2.5) {};
      \draw (J) to node[above] {$x_m$} (L);
      \node[ellipse,fit=(L) (L0),draw,inner sep=8pt,label=$\mathsf{t}_{\bar{G}}(\bar{u}|_{|v|+1})$] {};
      
      \node[coordinate,label=below right:$\mathsf{t}_G(x_n)$] (M) at (9.5,1.5) {};
      \node[coordinate] (M0) at (9.5,3) {};
      \draw[dashed] (L) to node[above] {$x|_{[m+1,n-1]}$} (M);
      \node[ellipse,fit=(M) (M0),draw,inner sep=8pt,label=$\mathsf{t}_{\bar{G}}(\bar{u}|_{|v|+n-m})$] {};

      \node[coordinate] (K) at (12,2) {};
      \node[coordinate] (K1) at (12,3) {};
      \draw (M) to node[above] {$x_n$} (K);
      \node[ellipse,fit=(K) (K1),draw,inner sep=8pt,label=$\bar{K}$] {};
    \end{tikzpicture}
    \caption{Finding $\bar{u}$}\label{fig:subset_graph_right_continuing}
  \end{figure}
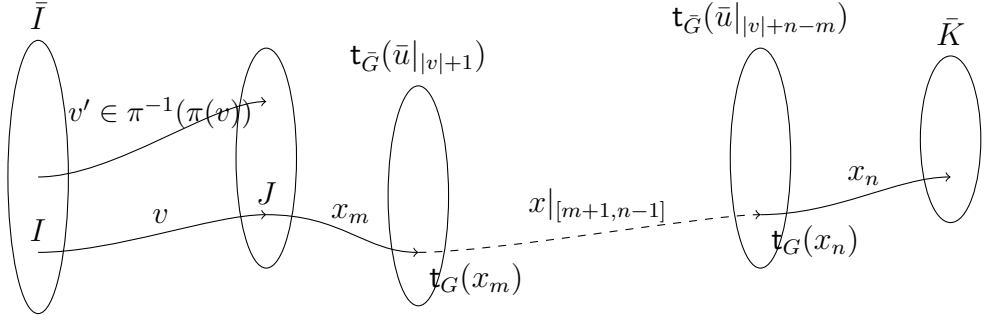    

  \begin{proof}[Proof of Theorem~\ref{thm:class-closing_is_continuing}]\label{proof:class-closing_is_continuing}
    Reset $\bar{G}$ to be the sink $H$ defined as above for the convenience. Then $\bar\pi$ is right continuing as a right resolving factor code onto a shift of finite type.
    
    We claim that for every point $x$ in $X$ there is a point $\bar{x}$ in $\bar{X}=\mathsf{X}_{\bar{G}}$ such that each $\bar\pi(\bar{x})=\pi(x)$ and $\mathsf{t}_{\bar{G}}(\bar{x}|_i)$ contains $\mathsf{t}_G(x|_i)$ for all $i\in\mathbb{Z}$. To show the claim, take any $\bar{I}$ in $\bar{\mathcal{V}}$ and a vertex $I$ of $G$ contained in $\bar{I}$. Given $m\le n$ let $v$ be a path from $I$ to $J=\mathsf{i}_G(x|_{[m,n]})$ over $G$ and let $\alpha=\pi(vx|_{[m,n]})$. Over $\bar{G}$ there appears a path $\bar{u}$ labeled $\alpha$ from $\bar{I}$ into a vertex $\bar{K}=\{\mathsf{t}_G(u)\mid\mathsf{i}_G(u)\in\bar{I},\pi(u)=\alpha\}$ of $\bar{G}$. Clearly $\mathsf{t}_{\bar{G}}(\bar{u}|_{|v|+i})$ contains $\mathsf{t}_G(x|_{m-1+i})$ for each $i=1,\cdots,n-m+1$. At last, by compactness we can find $\bar{x}$ as well (see Figure~\ref{fig:subset_graph_right_continuing}).
    
    Now given $x$ in $X$ and $y$ in $Y$ with $\pi(x)$ and $y$ left asymptotic, first fix $\bar{x}$ found by the claim. Since $\bar\pi$ is right continuing and $\bar\pi(\bar{x})=\pi(x)$, there exists a point $\bar{x}'$ in $\bar\pi^{-1}(y)$ left asymptotic to $\bar{x}$. Without loss of generality, assume that $\bar{x}|_{(-\infty,0]}=\bar{x}'|_{(-\infty,0]}$. It is easy to detach from $\bar{x}'|_{[n,\infty)}$ a right infinite path  $x'_n|_{[n,\infty)}$ in $X$ such that $\pi(x'_n|_{[n,\infty)})=y|_{[n,\infty)}$ and $\mathsf{t}_{\bar{G}}(\bar{x}'|_i)$ contains $\mathsf{t}_G(x'_n|_i)$ for all $n\le0$ and $i\ge n$. By compactness of $X$ we have a point $x'$ in $X$ such that $\pi(x')=y$ and $\mathsf{t}_{\bar{G}}(\bar{x}'|_i)$ contains $\mathsf{t}_G(x'|_i)$ for all $i$ in $\mathbb{Z}$. Let $J=\mathsf{t}_G(x'|_0)$.
    
    Let $D$ be the delay of $\pi$. Regard $\bar{x}|_{[-D,0]}$ as a set of paths of length $D+1$ over $G$ from a subset of $\bar{I}=\mathsf{t}_G(\bar{x}'|_{-D-1})$ into all the vertices in $\bar{J}=\mathsf{t}_G(\bar{x}|_0)$. Since $\bar{I}$ is right accessible from a vertex in $\mathcal{V}$ via some path, say $u$, over $G$, the set $\bar{x}|_{[-D,0]}$ is also an right accessible set of paths over $G$. As $\pi$ is right class-closing, it is tangled. Let $w$ be a bridge from $x|_{[-D,0]}$ to $x'|_{[-D,0]}$. Then $x|_{(-\infty,-D-1]}wx'|_{[1,\infty]}$ is left asymptotic to $x$ and is sent to $y$, completing the proof.
  \end{proof}
  
  Since a continuing factor of a shift of finite type is of finite type \cite{Yoo13}, a right class-closing code from an irreducible shift of finite type is right continuing if and only if its image is of finite type. Trivially, every right closing factor code from an irreducible shift of finite type onto a strictly sofic shift is not right continuing. For an example, see Example~\ref{eg:class-closing_but_not_continuing}.
  
  \begin{example}\label{eg:class-closing_but_not_continuing}
    Let $\pi$ be the labelling given in Figure~\ref{fig:class-closing_but_not_continuing}. Then it is bi-class-closing with both delays 2. Any positive power $(ab)^k$ of $ab$ is not synchronizing as $bb(ab)^k$ and $(ab)^kb$ are allowed while $bb(ab)^kb$ is not, so the image shift space is not of finite type. Hence $\pi$ is neither right nor left continuing.
  \end{example}

  \begin{figure}
    \begin{tikzpicture}[->]
      \node[circle,draw,inner sep=7pt] (I) at (0,1) {};
      \node[circle,draw,inner sep=7pt] (J) at (3,1) {};
      \node[circle,draw,inner sep=7pt] (K) at (6,1) {};
      
      \draw (3.35,1.15) to (4.5,1.15) node[above] {$a$} to (5.65,1.15);
      \draw (5.65,0.9) to (4.5,0.9) node[below] {$b$} to (3.35,0.9);
      \draw (0.35,1.15) to (1.5,1.15) node[above] {$a$} to (2.65,1.15);
      \draw (2.65,0.9) to (1.5,0.9) node[below] {$b$} to (0.35,0.9);
      \draw (3.1,1.4) arc (-75:255:0.4) node[above] {$a$};
    \end{tikzpicture}
    \caption{Bi-class-closing but neither right nor left continuing factor code}\label{fig:class-closing_but_not_continuing}
  \end{figure}
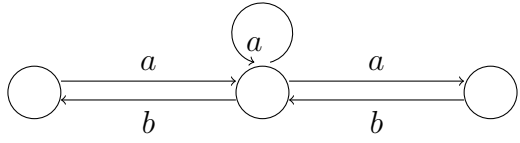

  Before moving on to the next section, we note a difference between closing factor codes and class-closing factor codes. We present an example of a bi-continuing factor code between irreducible shifts of finite type which is neither left nor right class-closing. The example contrasts with finite-to-one right (resp., left) continuing factor codes between irreducible shifts of finite type, which are automatically right (resp., left) closing \cite{BoyT84}.
  
  \begin{example}\label{eg:continuing_but_not_class-closing}
    Let $\pi$ be the labelling given in Figure~\ref{fig:continuing_but_not_class-closing}. Then it is bi-continuing but neither left nor right class-closing: ${}^\infty01^\infty$ has two left asymptotic preimages which are not right equivalent and ${}^\infty10^\infty$ has two right asymptotic preimages which are not left equivalent.
  \end{example}

  \begin{figure}
    \begin{tikzpicture}[->]
      \node[circle,draw,inner sep=7pt] (I) at (0,1) {};
      \node[circle,draw,inner sep=7pt] (J) at (3,1) {};
      
      \draw (0.35,1.15) to (1.5,1.15) node[above] {$0$} to (2.65,1.15);
      \draw (2.65,0.9) to (1.5,0.9) node[below] {$0$} to (0.35,0.9);
      \draw (3.25,0.65) arc (225:495:0.45) node[above] {$1$};
      \draw (-0.2,1.45) arc (0:300:0.4) node[above] {$0$};
      \draw (-0.2,0.55) arc (360:60:0.4) node[below] {$1$};
    \end{tikzpicture}
    \caption{Bi-continuing but neither right nor left class-closing factor code}\label{fig:continuing_but_not_class-closing}
  \end{figure}
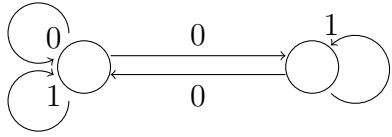

\section{Constant-class-to-one factor codes}\label{sec:c-to-one_factors}
  
  We establish relations between class-closing factor codes, continuing factor codes and constant-class-to-one factor codes.
  
  \begin{definition}\label{defn:constant-class-to-one}
    Let $\pi$ be a factor code from a shift of finite type $X$ onto a sofic shift $Y$. We say that $\pi$ is {\em constant-class-to-one} if $d_\pi^r(y)=d_\pi$ for all $y$ in $Y$. 
  \end{definition}
  
  \begin{figure}
    \begin{tikzpicture}[->]
      \node[circle,draw,inner sep=7pt] (I) at (0,1) {};
      \node[circle,draw,inner sep=7pt] (J) at (3,1) {};
      \node[circle,draw,inner sep=7pt] (K) at (0,3.5) {};
      
      \draw (-0.15,1.35) to (-0.15,2.25) node[left] {$b$} to (-0.15,3.15);
      \draw[<-]  (0.15,1.35) to (0.15,2.25) node[right] {$c$} to (0.15,3.15);
      \draw (0.35,1.15) to (1.5,1.15) node[above] {$a$} to (2.65,1.15);
      \draw[<-] (0.35,0.9) to (1.5,0.9) node[below] {$b$} to (2.65,0.9);
      \draw (-0.25,1.3) arc (15:300:0.4) node[above] {$a$};
      \draw (-0.2,0.65) arc (120:420:0.4) node[below] {$c$};
    \end{tikzpicture}
    \caption{A fibre-mixing factor code: every bi-infinite labelled paths has exactly one transition class}\label{fig:constant-class-to-one}
  \end{figure}
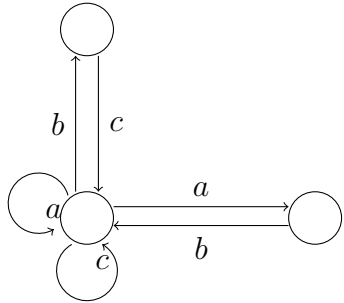
  
  Later in this section it will be shown that {\em left} or {\em right} does not matter in Definition~\ref{defn:constant-class-to-one}. If $\pi$ has constant preimages over every image point, then it is called {\em constant-to-one}. Later, we can see that $\pi$ is constant-to-one if and only if it is finite-to-one and constant-class-to-one. When $\pi$ is constant-class-to-one with $d_\pi=1$, it is called {\em fibre-mixing}. The property was defined and studied in \cite{Yoo10}. An example of such a factor code is given in Figure~\ref{fig:constant-class-to-one}.
  
  It has been known that a finite-to-one factor between irreducible shifts of finite type is open if and only if it is constant-to-one if and only if it is bi-closing \cite{CovP77,Nas83}. We also know that a factor code between irreducible shifts of finite type is open if and only if it is bi-continuing \cite{Jun11}. We aim to extend some of these equivalences to infinite-to-one cases. First, with the help of Lemma~\ref{lmm:separated_classes}, a constant-class-to-one factor code is shown to be bi-class-closing and bi-continuing.
    
  \begin{lemma}\label{lmm:separated_classes}
    Let $\pi$ be a factor code from an irreducible shift of finite type $X$ onto a sofic shift $Y$. There is $\delta>0$ such that for any $x$ in $X$ the set $\llbracket\pi(x)\rrbracket^r$ contains $d=d_\pi$ right classes including $[x]^r$ the distances among which are no less than $\delta$.
  \end{lemma}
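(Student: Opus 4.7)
The plan is to reduce to the uniform separation of right classes over right transitive points and extend to all points of $Y$ by a compactness argument. Theorem~\ref{thm:previous_results} together with the uniform-separation result of \cite{AllHJ13} furnishes a constant $\delta_0>0$ such that for every right transitive $y^*\in Y$ the $d$ right classes over $y^*$ are pairwise at distance at least $\delta_0$ in $X$; this will serve as the baseline.

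Given $x\in X$, I would approximate it by a sequence $x_n\to x$ in $X$ with $\pi(x_n)$ right transitive. Such a sequence is built by keeping $x_n|_{[-n,n]}=x|_{[-n,n]}$ and, using the irreducibility of the graph presenting the shift of finite type $X$, choosing the two tails of $x_n$ outside this window so that $\pi(x_n)$ visits every word of $Y$ infinitely often. Over each right transitive $\pi(x_n)$, enumerate the $d$ right classes as $C_1^{(n)}=[x_n]^r,C_2^{(n)},\ldots,C_d^{(n)}$ and choose representatives $z_i^{(n)}\in C_i^{(n)}$ with $z_1^{(n)}=x_n$; by the baseline they satisfy $d(z_i^{(n)},z_j^{(n)})\ge\delta_0$ for $i\ne j$. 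By compactness I pass to a subsequence with $z_i^{(n)}\to z_i^\infty\in\pi^{-1}(\pi(x))$, giving $z_1^\infty=x$ and pairwise distances $\ge\delta_0$.

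The main obstacle is showing that the $d$ limit points $z_i^\infty$ lie in $d$ distinct right classes over $\pi(x)$; this needs more than the fact (which I would also verify, by a compactness extraction on $m$-bridges) that $\sim^r$ is a closed relation, since closedness only preserves equivalences through limits, not inequivalences. To rule out the collapse of limits into fewer classes over $\pi(x)$, I would use the tangled-partition and t-depth framework of Section~\ref{sec:class_degrees}: a hypothetical equivalence $z_i^\infty\sim^r z_j^\infty$ would supply, on each interval $[m,n]$, a 2-way bridge between $z_i^\infty|_{[m,n]}$ and $z_j^\infty|_{[m,n]}$; for large $n$ the restrictions $z_i^{(n)}|_{[m,n]}$ and $z_j^{(n)}|_{[m,n]}$ coincide with those of $z_i^\infty,z_j^\infty$ and $\pi(x_n)|_{[m,n]}=\pi(x)|_{[m,n]}$, so the same word-level 2-way bridge forces $z_i^{(n)}|_{[m,n]}$ and $z_j^{(n)}|_{[m,n]}$ into a common tangled piece of a tangled partition of $\pi^{-1}(\pi(x_n)|_{[m,n]})$. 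Choosing $[m,n]$ in a stable window of $\pi(x_n)$, provided by Corollary~\ref{cor:min_depths_and_degree} since $\pi(x_n)$ is right transitive, then contradicts $C_i^{(n)}\ne C_j^{(n)}$. With $d$ distinct limit classes in hand and the $z_i^\infty$ pairwise at distance $\ge\delta_0$, the constant $\delta=\delta_0$ works uniformly in $x$.
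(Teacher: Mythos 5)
Your overall strategy --- approximate $x$ by points $x_n$ whose images are right transitive, invoke the $d$ mutually separated classes over each $\pi(x_n)$, and pass to limits --- is the same as the paper's. But the step you yourself flag as the main obstacle is resolved incorrectly. A word-level 2-way bridge between $z_i^{(n)}$ and $z_j^{(n)}$ restricted to a window does \emph{not} force those words into a common piece of a tangled partition: the paper explicitly warns that tangled partitions are not canonical (the partition into singletons is already tangled), so two words admitting a 2-way bridge may lie in different pieces; and conversely, sharing a piece would not by itself contradict $C_i^{(n)}\ne C_j^{(n)}$. The tool you actually need is the one you already cite for the baseline $\delta_0$: over a right transitive point, \emph{distinct right classes are mutually separated} \cite{AllHJ13}. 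Concretely, a 2-way $0$-bridge between $z_i^\infty$ and $z_j^\infty$ gives a word-level bridge on some window $[0,M]$; for large $n$ in your subsequence this is also a bridge between the corresponding windows of $z_i^{(n)}$ and $z_j^{(n)}$, and (taking $X$ $1$-step) it splices into a preimage of $\pi(x_n)$ that agrees with $z_i^{(n)}$ on a left half-line and is right asymptotic to $z_j^{(n)}$ --- hence lies in $C_j^{(n)}$ while meeting $C_i^{(n)}$, violating mutual separation. No tangled partitions and no appeal to Corollary~\ref{cor:min_depths_and_degree} are needed here.

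The second and more consequential gap: the lemma asserts that the $d$ \emph{classes} are pairwise $\delta$-separated, i.e.\ $d(u,v)\ge\delta$ for all $u\in[z_i^\infty]^r$ and $v\in[z_j^\infty]^r$, not merely that your chosen representatives $z_i^\infty$ are $\delta$-apart. (The stronger statement is exactly what Theorem~\ref{thm:constant-class-to-one} needs to conclude that $\pi$ separates right classes.) Your argument stops at the representatives. The paper closes this by a further approximation: given arbitrary $z^{(i)}\in[z_i^\infty]^r$ and $z^{(j)}\in[z_j^\infty]^r$, it takes right $m$-bridges from $z^{(i)}$ to $z_i^\infty$ (and likewise for $j$), splices in the right tails of the transitive approximants to produce preimages of $\pi(x_n)$ lying in $C_i^{(n)}$ and $C_j^{(n)}$ that converge to $z^{(i)}$ and $z^{(j)}$, and transfers the mutual separation of $C_i^{(n)}$ and $C_j^{(n)}$ to these limits. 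You would need to add this step (or an equivalent one) for the lemma as stated.
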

  \begin{proof}
    Without loss of generality assume $X$ to be 1-step and $\pi$ to be 1-block. We find such $d$ classes which are mutually separated. Let $y=\pi(x)$ and let $\{x_n\}_{n\in\mathbb{N}}$ be a sequence of right transitive points with $x_n|_{(-\infty,n]}=x|_{(-\infty,n]}$. Then $y_n=\pi(x_n)$ is right transitive and $y_n|_{(-\infty,n]}=y|_{(-\infty,n]}$ for every $n\in\mathbb{N}$.
    
    For every $n$ in $\mathbb{N}$ there are $d$ mutually separated right classes $C_{n,1},\cdots,C_{n,d}$ over $y_n$ such that $x_n^{(1)}=x_n$ is in $C_{n,1}$ since each $y_n$ is right transitive \cite[Theorem 4.4]{AllHJ13}. Take $x_n^{(j)}$ from $C_{n,j}$ for each $2\le j\le d$ and $n\in\mathbb{N}$. There is an increasing sequence $\{n_k\}_{k\in\mathbb{N}}$ such that $x_{n_k}^{(j)}$ converges to some $x^{(j)}$ as $k\to\infty$ for all $1\le j\le d$, $x_{n_k}^{(j)}|_{[-k,k]}=x^{(j)}|_{[-k,k]}$ and  $x^{(1)}=x$. Since $X$ is 1-step, by changing the left infinite tails we may assume that $x_{n_k}^{(j)}|_{(-\infty,k]}=x^{(j)}|_{(-\infty,k]}$ for all $1\le j\le d$ and $k\in\mathbb{N}$. It is clear that $\pi(x^{(j)})=y$ for $1\le j\le d$.
    
    Fix any $i\ne j$. We claim that there is no right bridge from $x^{(i)}$ to $x^{(j)}$. For a contradiction, assume that there is $\vec{u}$ in $\mathcal{B}_N(X)$ such that $x^{(i)}|_{(-\infty,0]}\vec{u}x^{(j)}|_{[N+1,\infty)}$ is in $\pi^{-1}(y)$ for some $N\in\mathbb{N}$. It follows that for $k\geq N+1$ we have $x^{(i)}_{n_k}|_{(-\infty,0]}\vec{u}x^{(j)}_{n_k}|_{[N+1,\infty)}\in\pi^{-1}(y_{n_k})$, which contradict the assumption that $C_{n_k,1},\cdots,C_{n_k,d}$ are mutually separated. 
   
    Now we show that $C_i=[x^{(i)}]^r$ and $C_j=[x^{(j)}]^r$ are mutually separated. Take arbitrary $z^{(i)}$ and $z^{(j)}$ in $C_i$ and $C_j$, respectively. For each $m\in\mathbb{N}$ find a right $m$-bridge $z^{(i)}_m$ from $z^{(i)}$ to $x^{(i)}$ such that $z^{(i)}_m|_{[m',\infty)}=x^{(i)}|_{[m',\infty)}$ for some $m'>m$. For any $k>m'$ the point $\bar{z}^{(i)}_m=z^{(i)}_m|_{(-\infty,m']}x^{(i)}_{n_k}|_{[m',\infty)}$ is legal in $X$, is mapped to $y_{n_k}$, and $\lim_m\bar{z}_m^{(i)}=\lim_mz^{(i)}_m=z^{(i)}$. Simultaneously, we also find preimages $\bar{z}_m^{(j)}$ of $y_{n_k}$ which are right transitive for all $m\in\mathbb{N}$ and converge to $z^{(j)}$.
    
    Note that since $x^{(i)}_{n_k}$ and $x^{(j)}_{n_k}$ are in different classes which are mutually separated, then $\bar{z}_m^{(i)}\sim^rx^{(i)}_{n_k}$ and $\bar{z}_m^{(j)}\sim^rx^{(j)}_{n_k}$ are mutually separated. Therefore their respective convergent points $z^{(i)}$ and $z^{(j)}$ are mutually separated, which implies that $C_i$ and $C_j$ are mutually separated.
  \end{proof}

  \begin{theorem}\label{thm:constant-class-to-one}
    Let $\pi$ be a 1-block factor code from an irreducible 1-step shift of finite type $X$ onto a sofic shift $Y$. The followings are equivalent:
    \begin{enumerate}
      \item\label{itm:constant-class-to-one} $\pi$ is constant-class-to-one with $d_\pi=d$.
      \item\label{itm:constant_depth} There is $N>0$ such that all words in $\mathcal{B}_N(Y)$ have the minimal depth $d$.
      \item\label{itm:constant_t-depth} There is $N>0$ such that all words in $\mathcal{B}_N(Y)$ have the minimal t-depth $d$.
    \end{enumerate}    
    If any of the above three conditions holds then $\pi$ is bi-class-closing and bi-continuing.
  \end{theorem}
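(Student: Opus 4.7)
The plan is to establish the equivalence via the cycle (\ref{itm:constant_depth}) $\Rightarrow$ (\ref{itm:constant_t-depth}) $\Rightarrow$ (\ref{itm:constant-class-to-one}) $\Rightarrow$ (\ref{itm:constant_depth}), and then to derive bi-class-closing and bi-continuing from any of these equivalent conditions. The implication (\ref{itm:constant_depth}) $\Rightarrow$ (\ref{itm:constant_t-depth}) is immediate from Remark~\ref{rmk:t_depth_and_depth}: for $|w|\geq N$ one has $\tau_\pi(w)\leq d_\pi(w)=d$, while the universal lower bound $\tau_\pi(w)\geq d_\pi=d$ coming from $d_\pi=\min_{w'}\tau_\pi(w')$ pinches the two inequalities together.

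For (\ref{itm:constant_t-depth}) $\Rightarrow$ (\ref{itm:constant-class-to-one}), I fix $y\in Y$ and any $d+1$ preimages $x_0,\dots,x_d$, and show two of them must be right equivalent. For each $a\in\mathbb{Z}^+$, the fibre $\pi^{-1}(y|_{[a,a+N-1]})$ admits a tangled partition of size exactly $d$, so among the $d+1$ restrictions $x_i|_{[a,a+N-1]}$ a pigeonhole produces a pair $(i,j)$ sharing a tangled class, which supplies a 2-way bridge between $x_i$ and $x_j$ at that window. Since $\mathbb{Z}^+$ is infinite and the number of pairs is finite, a further pigeonhole yields a fixed pair with 2-way bridges at windows whose starting positions are unbounded above. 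Each such bridge, glued in the 1-step SFT $X$, produces a right $a$-bridge, and since a right $m$-bridge also serves as a right $m'$-bridge for every $m'\leq m$, we recover right $m$-bridges at every level in both directions, so $x_i\sim^r x_j$. Hence $d_\pi^r(y)\leq d$; combined with $d_\pi^r(y)\geq d_\pi=d$ this gives (\ref{itm:constant-class-to-one}).

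The main obstacle is (\ref{itm:constant-class-to-one}) $\Rightarrow$ (\ref{itm:constant_depth}), for which I argue by contradiction. The property ``$d_\pi(w)>d$'' is closed under taking subwords, since extensions can only lower the depth; so the failure of (\ref{itm:constant_depth}) together with a standard compactness argument in $Y$ produces $y\in Y$ with $d_\pi(y|_{[-k,k]})>d$ for every $k$. Under (\ref{itm:constant-class-to-one}), I fix $d$ right-class representatives $x_1,\dots,x_d$ for $\pi^{-1}(y)$, choose a central position $n$, and set $M=\{x_i|_n\}_{i=1}^{d}$ (padded to cardinality $d$ if necessary). Since $y|_{[-k,k]}$ fails to be fibre-routable through $M$ at $n$, some preimage $u_k$ of $y|_{[-k,k]}$ resists every such routing. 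Extending each $u_k$ bi-infinitely in $X$ and passing to a subsequential limit, I intend to produce a preimage $x_\ast\in\pi^{-1}(y)$ whose $n$-th position cannot be matched to $M$ through any bridge; such an $x_\ast$ cannot be right equivalent to any $x_i$, so it contributes a $(d+1)$-st right class and contradicts (\ref{itm:constant-class-to-one}). The delicate technical point is that a bi-infinite extension of $u_k$ projects only onto some $\tilde y_k$ agreeing with $y$ on $[-k,k]$ rather than onto $y$ itself; I plan to remedy this by invoking the subset-graph construction of Section~\ref{sec:closing_factors} together with the uniform separation $\delta$ from Lemma~\ref{lmm:separated_classes}, which together force the limiting extension to live in $\pi^{-1}(y)$ while preserving the obstruction to routing through $M$.

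For the final statement, assume (\ref{itm:constant-class-to-one})--(\ref{itm:constant_t-depth}) hold. To prove right class-closing, take left asymptotic $x,x'\in\pi^{-1}(y)$ and suppose for a contradiction that $x\not\sim^r x'$. By Lemma~\ref{lmm:separated_classes} and (\ref{itm:constant-class-to-one}), the $d$ right classes of every fibre are pairwise $\delta$-separated in the metric of $X$, and since right equivalence is shift-invariant one has $d(\sigma^{-k}x,\sigma^{-k}x')\geq\delta$ for every $k$. However, the left-asymptoticity of $x,x'$ forces the agreement window of $\sigma^{-k}x$ and $\sigma^{-k}x'$ to grow without bound to the right, so that $d(\sigma^{-k}x,\sigma^{-k}x')\to 0$, a contradiction. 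The symmetric argument gives left class-closing. For bi-continuing, a separate argument applying the constant-to-one lift on the sink of the subset graph $\bar G$ from Section~\ref{sec:closing_factors} shows that $Y$ must itself be of finite type, after which Theorem~\ref{thm:class-closing_is_continuing} and its left-hand analogue yield that $\pi$ is bi-continuing.
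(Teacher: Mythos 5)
Your cycle (\ref{itm:constant_depth})$\Rightarrow$(\ref{itm:constant_t-depth})$\Rightarrow$(\ref{itm:constant-class-to-one}) is sound and matches the paper: the first implication is exactly Remark~\ref{rmk:t_depth_and_depth} plus Corollary~\ref{cor:min_depths_and_degree}, and your pigeonhole argument for the second is in substance a re-proof of Proposition~\ref{prop:finitely_many_smaller_depth_than_degree}, which the paper simply cites. The first genuine gap is in (\ref{itm:constant-class-to-one})$\Rightarrow$(\ref{itm:constant_depth}). The step ``such an $x_*$ cannot be right equivalent to any $x_i$'' does not follow from ``no central word of $x_*$ is routable through $M=\{x_i|_n\}$ at $n$.'' Routability of $x_*|_{[-k,k]}$ demands a path with the \emph{same two endpoints} $x_*|_{-k}$ and $x_*|_{k}$ passing through $M$ at $n$, whereas a right transition between $x_*$ and some $x_i$ only produces $m$-bridges whose left tail is one point's and whose right tail is the other's; such a bridge never carries the endpoint data needed to route the finite word $x_*|_{[-k,k]}$, so non-routability does not manufacture a $(d+1)$-st right class. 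Bridging exactly this is the nontrivial content of Theorem~4.22 of \cite{AllQ13} (every point $y$ contains a word of depth at most $d_\pi^r(y)$), which is what the paper invokes for this implication; your sketch neither cites it nor supplies a substitute, and your proposed repair of the other acknowledged difficulty (forcing the limit of extensions of the $u_k$ to land in $\pi^{-1}(y)$ while keeping the obstruction) is only a gesture at the subset graph and Lemma~\ref{lmm:separated_classes}, not an argument.

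The second gap is the bi-continuing claim. You propose to first show that $Y$ is of finite type and then apply Theorem~\ref{thm:class-closing_is_continuing}, but no argument for $Y$ being of finite type is given, and the natural route to that fact (a continuing factor of a shift of finite type is of finite type, \cite{Yoo13}) presupposes the continuing property you are trying to establish, so the plan is circular as stated; moreover the theorem is only claimed for $Y$ sofic. The paper avoids this entirely with a direct construction: given $x$ and $y$ left asymptotic to $\pi(x)$ with $w=y|_{[1,N]}=\pi(x)|_{[1,N]}$, take a tangled partition of $\pi^{-1}(w)$ of size $d$, find a preimage $x'$ of $y$ whose $[1,N]$-word lies in the same part as $x|_{[1,N]}$, and splice in a bridge to obtain a preimage of $y$ left asymptotic to $x$. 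By contrast, your argument for bi-class-closing, via the uniform $\delta$ of Lemma~\ref{lmm:separated_classes} together with shift-invariance of $\sim^r$ and the decay of distance for left asymptotic points, is correct and essentially the paper's.
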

  \begin{proof}
    (\ref{itm:constant_t-depth})$\implies$(\ref{itm:constant-class-to-one}): Let for a fixed $N>0$ all words in $\mathcal{B}_N(Y)$ have the minimal t-depth $d$. Given any point $y$ in $Y$ choose any word $w$ in $\mathcal{B}_N(\omega(y))$ where $\omega(y)$ is the $\omega$-limit set of $y$. Then $d_\pi^r(y)\le \tau_\pi(w)=d$ by Proposition ~\ref{prop:finitely_many_smaller_depth_than_degree}. At the same time $d=d_\pi\le d_\pi^r(y)$. Hence $d_\pi^r(y)=d$.
    
    (\ref{itm:constant-class-to-one})$\implies$(\ref{itm:constant_depth}): Suppose that for any $N>0$ there is a word in $\mathcal{B}_N(Y)$ whose depth is greater than $d=d_\pi$. By compactness there is a point $y$ in $Y$ such that all words occurring in it have depth greater than $d$. According to Theorem 4.22 of \cite{AllQ13} always occurs a word with depth $d_\pi^r(y)$ in $y$. Hence $d_\pi^r(y)>d$.
    
    (\ref{itm:constant_depth})$\implies$(\ref{itm:constant_t-depth}): Let for a fixed $N>0$ all words in $\mathcal{B}_N(Y)$ have the minimal depth $d$. Applying Remark~\ref{rmk:t_depth_and_depth} we obtain $\tau_\pi(w)\le d_\pi(w)=d$ for all $w$ in $\mathcal{B}_N(Y)$. Applying Corollary~\ref{cor:min_depths_and_degree} we obtain $d=d_\pi\le\tau_\pi(w)$ for all $w$ in $\mathcal{B}_N(Y)$. Hence $\tau_\pi(w)=d$ for all $w$ in $\mathcal{B}_N(Y)$.
    
    Now assume that $\pi$ is constant-class-to-one with $d=d_\pi$ and that $N$ is such a number that $d_\pi(w)=d$ for all $w$ in $\mathcal{B}_N(Y)$. Using Lemma~\ref{lmm:separated_classes} we can find some $\delta>0$ such that for any point $y$ of $Y$ there are $d$ right classes over $y$ any two of which are separated by some distance greater than $\delta$. Only $d$ right classes exist over $y$ and the existence of $\delta$ immediately implies that $\pi$ separates right classes. Thus $\pi$ is right class-closing. In a similar way $\pi$ is left class-closing and hence is bi-class-closing.
    
    Finally let $x$ be in $X$ and $y$ a point left asymptotic to $\pi(x)$ in $Y$. Assume without loss of generality that $w=y|_{[1,N]}=\pi(x)|_{[1,N]}$. There is a tangled partition $\mathcal{P}=\{P_1,\cdots,P_d\}$ of $\pi^{-1}(w)$. Say $x|_{[1,N]}$ is in $P_1$. There is a preimage $x'$ of $y$ such that $x'|_{[1,N]}$ is in $P_1$. Take a bridge $\vec{u}$ from $x|_{[1,N]}$ to $x'|_{[1,N]}$. Then $z=x|_{(-\infty,0]}\vec{u}x'|_{[N+1,\infty)}$ is a preimage of $y$ left asymptotic to $x$ as desired. So $\pi$ is right continuing. In a similar way $\pi$ is left continuing and hence is bi-continuing.
  \end{proof}
  
  The conditions (\ref{itm:constant_depth}) and (\ref{itm:constant_t-depth}) are irrelevant of directions of transitions. So it is observed that $\pi$ is constant-class-to-one if and only if $d_\pi^l(y)=d_\pi$ for all $y$ in $Y$. The equivalence justifies to use the term simply {\em constant-class-to-one}, not emphasizing {\em right} or {\em left} like constant-{\em right}-class-to-one or constant-{\em left}-class-to-one.

  \begin{corollary}\label{cor:constant-class-to-one_iff_constant-left-class-to-one}
    A factor code from an irreducible shift of finite type onto a sofic shift is constant-class-to-one with $d_\pi=d$ if and only if it is constant-left-class-to-one.
  \end{corollary}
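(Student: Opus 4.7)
The plan is to leverage the direction-neutral characterisation of constant-class-to-one codes provided by Theorem~\ref{thm:constant-class-to-one}. The key observation is that the depth $d_\pi(w)$ and t-depth $\tau_\pi(w)$ of a word $w\in\mathcal{B}(Y)$ (Definitions~\ref{defn:routable_blocks} and~\ref{defn:tangled_blocks}) are defined in terms of 2-way bridges and routings between words of the same image, with no built-in asymmetry in time. Hence conditions~(\ref{itm:constant_depth}) and~(\ref{itm:constant_t-depth}) of Theorem~\ref{thm:constant-class-to-one} are statements about $\mathcal{B}(Y)$ that carry no orientation.

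I would argue as follows. First recode so that $\pi$ is 1-block and $X$ is 1-step, which preserves both the right and left class degrees as well as all depth/t-depth data. For the forward direction, Theorem~\ref{thm:constant-class-to-one} immediately gives that being constant-class-to-one with $d_\pi=d$ implies the existence of $N>0$ with $\tau_\pi(w)=d$ for every $w\in\mathcal{B}_N(Y)$. For the converse, I would repeat the argument of Theorem~\ref{thm:constant-class-to-one} with ``right'' everywhere replaced by ``left''. The supporting results---Proposition~\ref{prop:finitely_many_smaller_depth_than_degree}, Corollary~\ref{cor:min_depths_and_degree}, Remark~\ref{rmk:t_depth_and_depth}, Lemma~\ref{lmm:separated_classes}, and the invoked Theorem~4.22 of \cite{AllQ13}---either are themselves symmetric in the relevant sense or possess obvious left-analogues obtained by replacing $\omega$-limit sets, right transitive points, and right-asymptotic tails by their left counterparts. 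Since $d_\pi^l=d_\pi^r=d_\pi$ by \cite[\S6]{AllHJ13}, the common minimal value $d$ is the same for both directions. The parallel equivalence then reads: $d_\pi^l(y)=d_\pi$ for all $y\in Y$ if and only if there exists $N>0$ with $\tau_\pi(w)=d$ for every $w\in\mathcal{B}_N(Y)$.

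Chaining the two equivalences through this common middle condition gives the corollary. The only substantive work is the bookkeeping for the left-analogues of the cited lemmas, which I expect to be routine relabelling since the theory of \cite{AllHJ13,AllQ13} is set up to treat both directions in parallel; no new dynamical input is required.
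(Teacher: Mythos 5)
Your proposal is correct and follows essentially the same route as the paper: the authors derive the corollary precisely from the observation that conditions (\ref{itm:constant_depth}) and (\ref{itm:constant_t-depth}) of Theorem~\ref{thm:constant-class-to-one} concern only depths and t-depths of words, which carry no time orientation, so the left-handed version of the theorem holds by symmetry and the two characterisations chain through the common middle condition. Your additional bookkeeping about left-analogues of the supporting lemmas and the equality $d_\pi^l=d_\pi^r=d_\pi$ is exactly the (implicit) content of the paper's one-line justification.
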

  
  Proposition~\ref{prop:right_class-closing_left_continuing_then_constant-class-to-one} proves the converse of Theorem~\ref{thm:constant-class-to-one}. Theorem~\ref{thm:bi-class-closing_bi-continuing_iff_constant-class-to-one} is a main result of this section. It is shown by Proposition~\ref{prop:right_class-closing_left_continuing_then_constant-class-to-one} combined with Theorem~\ref{thm:constant-class-to-one}.
  
  \begin{proposition}\label{prop:right_class-closing_left_continuing_then_constant-class-to-one}
    Let $\pi$ be a factor code from an irreducible shift of finite type $X$ onto a sofic shift $Y$. If $\pi$ is right class-closing and left-continuing then it is constant-class-to-one.
  \end{proposition}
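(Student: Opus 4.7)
The plan is to show $d_\pi^r(y)\le d_\pi$ for every $y\in Y$; the reverse inequality is automatic from the definition of $d_\pi$, so this will yield constant-class-to-one. The approach is to reduce an arbitrary $y$ to a nearby right-transitive point via left-continuing, and to use right class-closing to control right classes under the lift.

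First I would fix $y\in Y$ and exploit irreducibility of the sofic shift $Y$ to extend $y|_{(-\infty,0]}$ to the right to a right-transitive point $y'\in Y$ with $y'|_{(-\infty,0]}=y|_{(-\infty,0]}$. Given preimages $x_1,\dots,x_k\in\pi^{-1}(y)$ lying in $k$ pairwise distinct right classes, left-continuing applied to each $x_i$ and $y'$ produces a preimage $x_i'\in\pi^{-1}(y')$ left asymptotic to $x_i$; after passing to a common coordinate we may assume $x_i'|_{(-\infty,-N]}=x_i|_{(-\infty,-N]}$ for every $i$. Since $y'$ is right transitive, Theorem~\ref{thm:previous_results} gives $d_\pi^r(y')=d_\pi$, so the bound $k\le d_\pi$ follows provided the lifts $x_1',\dots,x_k'$ remain pairwise right-inequivalent over $y'$.

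The central step is to establish this preservation of right-inequivalence. Arguing contrapositively, suppose $x_a'\sim^r x_b'$ over $y'$ for some $a\ne b$; the goal is to deduce $x_a\sim^r x_b$ over $y$, contradicting the choice of $x_a,x_b$. For each sufficiently negative $m$, right-equivalence over $y'$ supplies a right $m$-bridge $\vec w_m$ from $x_a'$ to $x_b'$ whose left tail coincides with $x_a$ on $(-\infty,m]$ (using $m\le-N$) and whose right tail coincides with $x_b'$ from some coordinate $n_m$ onward, with $\pi(\vec w_m)=y'$. Applying left-continuing to $\vec w_m$ and the left asymptote $y$ of $\pi(\vec w_m)=y'$ yields a preimage $\vec z_m\in\pi^{-1}(y)$ left asymptotic to $\vec w_m$; for $m$ small enough this forces $\vec z_m|_{(-\infty,m]}=x_a|_{(-\infty,m]}$, and right class-closing via Corollary~\ref{cor:separate_right_and_left_classes} applied to the left-asymptotic pair $(\vec z_m,x_a)$ then gives $\vec z_m\sim^r x_a$ over $y$.

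The principal obstacle is to upgrade $\vec z_m$ into an actual right $m$-bridge from $x_a$ to $x_b$ over $y$, i.e.\ to arrange its right tail to agree eventually with $x_b$. I would address this by the symmetric lift: left-continuing applied to $x_b'$ produces $\tilde z_b\in\pi^{-1}(y)$ left asymptotic to $x_b'$, so $\tilde z_b$ has left tail $x_b$ and, again by right class-closing, $\tilde z_b\sim^r x_b$. Then I invoke the class-closing delay $D$ from Theorem~\ref{thm:class-closing_conditions}: since $x_a'\sim^r x_b'$ forbids mutual separation of classmates (Theorem~\ref{thm:class-closing_conditions}(\ref{itm:class_mutual_separation})), $x_a'$ and $x_b'$ meet at arbitrarily far coordinates, and the tangled property of length-$(D+1)$ paths from such a shared vertex provides the splice needed to combine the right tail of $\vec z_m$ with that of $\tilde z_b$ inside $\pi^{-1}(y)$. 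Chaining right-equivalences then yields $x_a\sim^r\vec z_m\sim^r\cdots\sim^r\tilde z_b\sim^r x_b$, the desired contradiction. Making the splice rigorous — verifying that the vertex produced by left-continuing at the cutoff coordinate $-N$ is compatible with the tangled bridges provided by delay $D$ so that the spliced path projects cleanly onto $y$ — is the heart of the technical work and is precisely where the two hypotheses of right class-closing and left-continuing must be combined rather than applied in isolation.
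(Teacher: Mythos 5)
There is a genuine gap, and it begins at the very first lift. You construct $y'$ with $y'|_{(-\infty,0]}=y|_{(-\infty,0]}$, so $y'$ is \emph{left} asymptotic to $y$, and you then ask left-continuing to supply preimages of $y'$ that are \emph{left} asymptotic to the $x_i$. By Definition~\ref{defn:continuing} that is the \emph{right}-continuing property: left-continuing only lifts a point that is \emph{right} asymptotic to $\pi(x)$ to a preimage \emph{right} asymptotic to $x$. The same inversion recurs when you later ``apply left-continuing to $\vec w_m$ and the left asymptote $y$ of $\pi(\vec w_m)$.'' This is not a naming quibble: the hypotheses of the proposition are deliberately crossed (right class-closing with \emph{left} continuing), and the uncrossed combination your argument actually consumes --- right class-closing together with right continuing --- does not imply constant-class-to-one. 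For instance, the code of Example~\ref{eg:class-closing} is right resolving onto a $1$-step shift of finite type, hence right class-closing and right continuing, yet it is not left class-closing and therefore not constant-class-to-one by Theorem~\ref{thm:constant-class-to-one}. So no argument along your lines can succeed without using left-continuing in its actual, right-asymptotic form. Independently of this, the step you yourself identify as the heart of the matter --- upgrading $x_a'\sim^r x_b'$ over $y'$ to $x_a\sim^r x_b$ over $y$ --- is never carried out, and it is genuinely problematic in your setup: the lifts agree with the originals only on a left ray, right equivalence is about splicing right tails, and $y$ and $y'$ disagree on $[1,\infty)$, so a right $m$-bridge over $y'$ cannot simply be reinterpreted over $y$; the proposed delay-$D$ ``splice'' is not constructed.

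The paper's proof runs in the opposite direction and needs none of this machinery. Take $y'$ \emph{right} asymptotic to $y$ with a left transitive left tail; left-continuing, as defined, lifts each of $d_\pi^r(y)$ pairwise right-inequivalent preimages $x_j$ of $y$ to a preimage $x_j'$ of $y'$ \emph{right} asymptotic to $x_j$. Since $y'$ is left transitive, $d_\pi^l(y')=d_\pi$, so if $d_\pi^r(y)>d_\pi$ the pigeonhole principle forces two lifts into the same \emph{left} class. On the other hand, right-asymptotic lifts of right-inequivalent points are right-inequivalent: for large $m$ a right $m$-bridge between the lifts can have its left tail and image exchanged for those of the originals, because all points and images in sight agree on the right. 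Thus $[x_i']^l=[x_k']^l$ while $[x_i']^r\ne[x_k']^r$, contradicting the inclusion $[x]^l\subset[x]^r$ of Theorem~\ref{thm:class-closing_conditions}(\ref{itm:left_class_in_right_class}) that right class-closing provides. The lesson is that right class-closing should enter through that inclusion of left classes in right classes, and left-continuing through lifts that preserve right tails; your proposal uses each hypothesis in the wrong direction and then must invent a splicing argument to compensate.
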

  \begin{proof}
    Suppose that $\pi$ is right class-closing and left continuing but not constant-class-to-one. Choose a point $y$ in $Y$ with $d=d_\pi^r(y)>d_\pi$ and preimages $x_1,\cdots,x_d$ of $y$ such that $x_i\not\sim^rx_j$ for all $i\ne j$. Replacing $y|_{(-\infty,0]}$ with a left transitive tail get a left transitive point $y'$ right asymptotic to $y$. Since $y'$ is left transitive, we see that $d'=d_\pi^l(y')=d_\pi<d$. Since $\pi$ is left-continuing, for every $1\le j\le d$ there must be a preimage $x'_j$ of $y'$ right asymptotic to $x_j$. As $d'<d$ there must be $1\le i<k\le d$ such that $x'_i\sim^lx'_k$, i.e., $[x'_i]^l=[x'_k]^l$. However, as right asymptotic points of $x_i$ and $x_k$, respectively, they are not right equivalent and $[x'_i]^r\ne[x'_k]^r$. Since we assumed $\pi$ to be right class-closing, by (\ref{itm:left_class_in_right_class}) of Theorem~\ref{thm:class-closing_conditions} we meet a contradiction.
  \end{proof}
  
  \begin{theorem}\label{thm:bi-class-closing_bi-continuing_iff_constant-class-to-one}
    Let $\pi$ be a factor code from an irreducible shift of finite type $X$ onto a sofic shift $Y$. Then $\pi$ is right class-closing and left-continuing if and only if it is left class-closing and right-continuing if and only if it is constant-class-to-one.
  \end{theorem}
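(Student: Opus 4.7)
The proof is essentially a bookkeeping exercise that glues together Theorem~\ref{thm:constant-class-to-one} (constant-class-to-one implies bi-class-closing and bi-continuing) with Proposition~\ref{prop:right_class-closing_left_continuing_then_constant-class-to-one} (right class-closing plus left continuing implies constant-class-to-one) and their left/right symmetric counterparts. My plan is to establish a three-way cycle of implications:
\[
\text{constant-class-to-one}\;\Longrightarrow\;\text{right class-closing and left continuing}\;\Longrightarrow\;\text{constant-class-to-one},
\]
and analogously with the roles of left and right swapped.

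First, assume $\pi$ is constant-class-to-one. By Theorem~\ref{thm:constant-class-to-one} it is bi-class-closing and bi-continuing, hence in particular it is simultaneously right class-closing and left continuing, and simultaneously left class-closing and right continuing. This gives two of the three implications for free.

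Next, suppose $\pi$ is right class-closing and left continuing. By Proposition~\ref{prop:right_class-closing_left_continuing_then_constant-class-to-one} $\pi$ is constant-class-to-one. For the remaining direction, assume $\pi$ is left class-closing and right continuing. I would invoke the symmetric version of Proposition~\ref{prop:right_class-closing_left_continuing_then_constant-class-to-one}, obtained by reversing time and interchanging the roles of left and right throughout its proof (noting that Theorem~\ref{thm:class-closing_conditions}, in particular condition~(\ref{itm:left_class_in_right_class}), is stated symmetrically, so its left analogue $[x]^r\subset[x]^l$ follows from left class-closing); this yields $d_\pi^l(y)=d_\pi$ for every $y\in Y$, and then Corollary~\ref{cor:constant-class-to-one_iff_constant-left-class-to-one} translates this back into $d_\pi^r(y)=d_\pi$ for every $y$, i.e., $\pi$ is constant-class-to-one.

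There is no real obstacle here; the only step that requires a little care is making explicit that Proposition~\ref{prop:right_class-closing_left_continuing_then_constant-class-to-one} admits the left/right-swapped variant. To justify this, I would point out that its proof uses only: (i) the asymmetric definitions of left and right transition classes, both of which are already set up symmetrically in Section~\ref{sec:class_degrees}; (ii) the equality $d_\pi^l=d_\pi^r=d_\pi$ for left (resp.\ right) transitive points, which holds symmetrically; and (iii) the characterization $[x]^l\subset[x]^r$ of right class-closing from Theorem~\ref{thm:class-closing_conditions}, whose left analogue $[x]^r\subset[x]^l$ characterizes left class-closing. Once this symmetry is acknowledged, the three-way equivalence closes up immediately.
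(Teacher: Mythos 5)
Your proposal is correct and matches the paper's own argument, which states that the theorem ``is shown by Proposition~\ref{prop:right_class-closing_left_continuing_then_constant-class-to-one} combined with Theorem~\ref{thm:constant-class-to-one}''; you simply make explicit the left/right-symmetric variant of the proposition and the role of Corollary~\ref{cor:constant-class-to-one_iff_constant-left-class-to-one}, which the paper leaves implicit. No gaps.
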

  
  Finite-to-one class-closing factor codes are exactly closing ones. Hence, a factor code is finite-to-one and constant-class-to-one factor code if and only if it is bi-closing and bi-continuing if and only if it is constant-to-one.
  
  Further, applying Theorem~\ref{thm:class-closing_is_continuing} we obtain the following Corollary~\ref{cor:bi-class-closing_sft_iff_constant-to-one} which generalises Theorem~7.3. in \cite{Nas83}.
  
  \begin{corollary}\label{cor:bi-class-closing_sft_iff_constant-to-one}
    Let $\pi$ be a factor code between irreducible shifts of finite type $X$ and $Y$. Then $\pi$ is bi-class-closing if and only if it is constant-class-to-one.
  \end{corollary}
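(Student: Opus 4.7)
The plan is to combine Theorem \ref{thm:class-closing_is_continuing} with Theorem \ref{thm:bi-class-closing_bi-continuing_iff_constant-class-to-one}; both statements do essentially all the work, and the corollary is extracted by pairing ``class-closing'' on one side with ``continuing'' on the other, exploiting the fact that in the SFT setting class-closing upgrades for free to continuing.

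For the forward direction, assume $\pi$ is bi-class-closing. In particular $\pi$ is right class-closing, so Theorem \ref{thm:class-closing_is_continuing} (applied with $Y$ of finite type, which is the hypothesis of the corollary but not of Theorem \ref{thm:bi-class-closing_bi-continuing_iff_constant-class-to-one}) yields that $\pi$ is right continuing. Symmetrically, left class-closing together with Theorem \ref{thm:class-closing_is_continuing} applied to the transpose (reverse-time) code gives that $\pi$ is left continuing. Now $\pi$ is, say, right class-closing and left continuing, and Theorem \ref{thm:bi-class-closing_bi-continuing_iff_constant-class-to-one} immediately gives that $\pi$ is constant-class-to-one.

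For the converse, assume $\pi$ is constant-class-to-one. Theorem \ref{thm:bi-class-closing_bi-continuing_iff_constant-class-to-one} tells us directly that $\pi$ is both ``right class-closing and left continuing'' and ``left class-closing and right continuing''. Selecting the class-closing half of each statement gives that $\pi$ is simultaneously right class-closing and left class-closing, i.e.\ bi-class-closing.

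The only step requiring a moment's care is the application of Theorem \ref{thm:class-closing_is_continuing} in the forward direction: that theorem is stated for right class-closing codes and concludes right continuing, so the ``left'' analogue must be invoked by a formal symmetry argument (the left version of Definition \ref{defn:class-closing} and Definition \ref{defn:continuing} are obtained by reversing time, and all arguments in \S 4 are symmetric in direction). No genuinely new calculation is needed, and there is no real obstacle—the corollary is a clean packaging of the two theorems once one observes that $Y$ being of finite type is exactly what is needed to bridge between class-closing and continuing.
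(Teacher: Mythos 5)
Your proposal is correct and follows essentially the same route as the paper, which obtains the corollary by combining Theorem~\ref{thm:class-closing_is_continuing} (and its left-handed analogue, valid since $Y$ is of finite type) with Theorem~\ref{thm:bi-class-closing_bi-continuing_iff_constant-class-to-one}. Your explicit handling of the time-reversal symmetry and the observation that the converse direction needs only Theorem~\ref{thm:bi-class-closing_bi-continuing_iff_constant-class-to-one} are both accurate.
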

  
  Until now some equivalences between closing codes, continuing codes and constant-to-one factor codes have been extended to infinite-to-one cases by Theorem~\ref{thm:bi-class-closing_bi-continuing_iff_constant-class-to-one} and Corollary~\ref{cor:bi-class-closing_sft_iff_constant-to-one}. Not as in finite-to-one cases, however, bi-continuing factor codes are not necessarily bi-class-closing nor holds the converse as already shown in Example~\ref{eg:continuing_but_not_class-closing} and in Example~\ref{eg:class-closing_but_not_continuing}, respectively.
  
  Of the two examples, Example~\ref{eg:class-closing_but_not_continuing} also provides a bi-class-closing factor code which is not constant-class-to-one. Its image is surely not of finite type but appears to be an {\em almost finite type} shift, which is defined to be the image of a bi-closing factor code from an irreducible shift of finite type. Many factor theorems between shifts of finite type are generalised to between almost finite type shifts \cite{Ash93,BoyMT87} and a sofic shift $Y$ is almost finite type if and only if among factor codes from irreducible shifts of finite type onto it there is a {\em minimal} one through which every such a factor code is factored \cite{BoyKM85}.
  
  We now show that a bi-class-closing factor of an irreducible shift of finite type is almost finite type. The graph $\bar{G}$ and the factor $\bar\pi$ from $\bar{G}$ given by the subset construction in the previous section are to be used again. As in Theorem~\ref{thm:class-closing_is_continuing} $\bar{G}$ actually denotes the sink $H$ of the subset construction graph of $G$ given in the section. The following proposition shows that $\bar{G}$ together with $\bar\pi$ is a bi-closing extension of $Y$.

  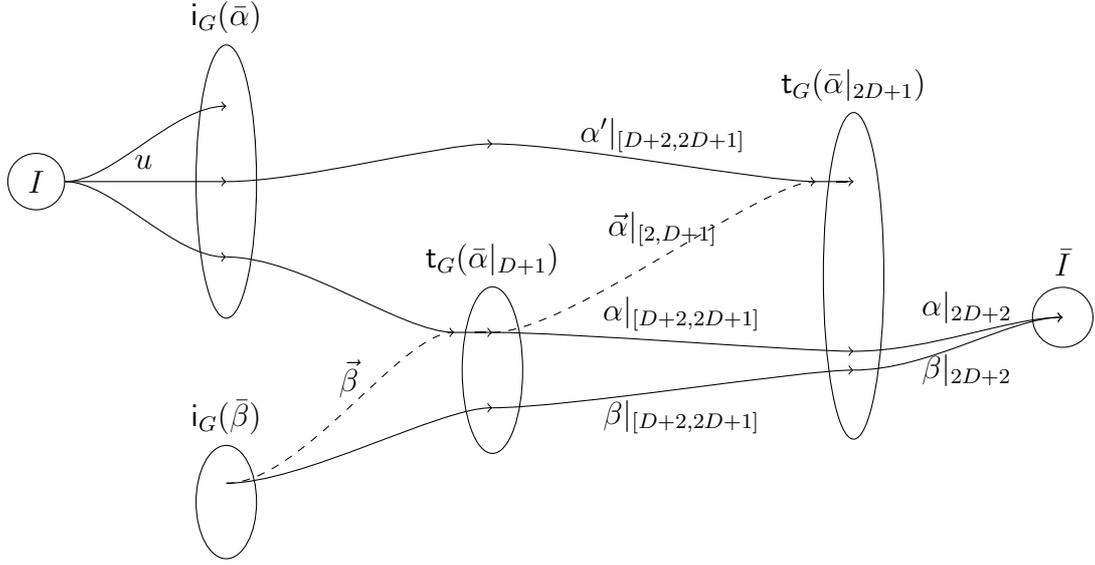
\begin{figure}
    \begin{tikzpicture}[->,to path={.. controls +(1,0) and +(-0.7,0) .. (\tikztotarget) \tikztonodes}]
      \node[circle,draw] (I) at (0,4) {$I$};
      \node[coordinate] (J1) at (2.5,3) {};
      \node[coordinate] (J2) at (2.5,4) {};
      \node[coordinate] (J3) at (2.5,5) {};
      \node[ellipse,fit=(J1) (J2) (J3),draw,inner sep=8pt,label=$\mathsf{i}_G(\bar\alpha)$] {};
      \node[coordinate] (J0) at (2.5,0) {};
      \node[coordinate] (J-) at (2.5,-.5) {};
      \node[ellipse,fit=(J-) (J0),draw,inner sep=8pt,label=$\mathsf{i}_G(\bar\beta)$] {};
      
      \draw (I) to (J1);
      \draw (I) to (J2);
      \draw (I) to node[below] {$u$} (J3);
      
      \node[coordinate] (K1) at (5.5,2) {};
      \node[coordinate] (K1') at (6,2) {};
      \node[coordinate] (K2) at (6,4.5) {};
      \draw (J1) to (K1) to (K1');
      \draw (J2) to (K2);
      \node[coordinate] (K0) at (6,1) {};
      \node[ellipse,fit=(K0) (K1'),draw,inner sep=8pt,label=$\mathsf{t}_G(\bar\alpha|_{D+1})$] {};
      \draw (J0) to (K0);
      \draw [dashed] (J0) to node[above] {$\vec\beta$} (K1);

      \node[coordinate] (L0) at (10.75,1.5) {};
      \node[coordinate] (L1) at (10.75,1.75) {};
      \node[coordinate] (L2) at (10.25,4) {};
      \node[coordinate] (L2') at (10.75,4) {};
      \draw (K0) to node[below] {$\beta|_{[D+2,2D+1]}$} (L0);
      \draw (K1') to node[above] {$\alpha|_{[D+2,2D+1]}$} (L1);
      \draw (K2) to node[above] {$\alpha'|_{[D+2,2D+1]}$} (L2) to (L2');
      \node[ellipse,fit=(L0) (L1) (L2'),draw,inner sep=8pt,label=$\mathsf{t}_G(\bar\alpha|_{2D+1})$] {};
      \draw [dashed] (K1') to node[above] {$\vec\alpha|_{[2,D+1]}$} (L2);

      \node[coordinate] (I*) at (13.5,2.2) {};
      \draw (L0) to node[below] {$\beta|_{2D+2}$} (I*);
      \draw (L1) to node[above] {$\alpha|_{2D+2}$} (I*);
      \node[ellipse,fit=(I*),draw,inner sep=8pt,label=$\bar{I}$] {};
    \end{tikzpicture}
    \caption{Left class-closing with delay $2D+1$}\label{fig:bi-class-closing_factors_are_almost_finite_type}
  \end{figure}
  
  \begin{proposition}\label{prop:bi-class-closing_image_has_a_bi-closing_cover}
    Let $G=(\mathcal{V},\mathcal{E})$ be an irreducible graph. Let $X=\mathsf{X}_G$ and $Y=\pi(X)$ where $\pi$ is a labelling map on $\mathcal{E}$. If $\pi$ is bi-class-closing with both right and left delays $D$, then $\bar\pi$ is left closing with delay $2D+1$.
  \end{proposition}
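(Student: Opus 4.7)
The plan is to take two paths $\bar\alpha,\bar\beta$ of length $2D+2$ in the sink $\bar G$ with common label $w=\bar\pi(\bar\alpha)=\bar\pi(\bar\beta)$ and common terminal vertex $\bar I=\mathsf{t}_{\bar G}(\bar\alpha)=\mathsf{t}_{\bar G}(\bar\beta)$ and to deduce $\mathsf{i}_{\bar G}(\bar\alpha)=\mathsf{i}_{\bar G}(\bar\beta)$ as subsets of $\mathcal V$. Since $\bar\pi$ is right resolving, this set-equality is precisely $\bar\alpha|_1=\bar\beta|_1$, which is the content of left closing with delay $2D+1$. Writing $\bar J=\mathsf{i}_{\bar G}(\bar\alpha)$ and $\bar J'=\mathsf{i}_{\bar G}(\bar\beta)$, both are sink vertices satisfying $\mathsf{t}_{G,\bar J}(w)=\mathsf{t}_{G,\bar J'}(w)=\bar I$; under the subset construction, $\bar\alpha$ and $\bar\beta$ encode the full families of $w$-labelled paths in $G$ from vertices of $\bar J$, respectively $\bar J'$, into $\bar I$.

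For an alive $J\in\bar J$ — one admitting an outgoing $w$-path in $G$ — I would fix such a path $\alpha\colon J\to K$ with $K\in\bar I$ and pick a sibling $w$-path $\beta\colon J'\to K$ with $J'\in\bar J'$, whose existence is guaranteed by $K\in\mathsf{t}_{G,\bar J'}(w)$. The mechanism is then a coordinated application of both halves of the bi-class-closing hypothesis. Left class-closing of $\pi$ (delay $D$) applied to the length-$(D+1)$ right-end blocks $\alpha|_{[D+2,2D+2]}$ and $\beta|_{[D+2,2D+2]}$, which share the terminal vertex $K$ and have identical label, produces a 2-way bridge and hence a modified $w$-path from $J$ to $K$ that agrees with $\beta$ on its last edge. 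Sliding the bridging window one position to the left — legitimate because the previous bridging step enlarges the shared suffix by one vertex — and iterating propagates the matching backward. A complementary application of right class-closing of $\pi$ (delay $D$) on length-$(D+1)$ first-half segments from the (by then) shared middle vertex tangles the heads and pins down the shared starting vertex. The length $2D+2=(D+1)+(D+1)$ is chosen exactly so that the length-$(D+1)$ right-end bridging window and the length-$(D+1)$ first-half bridging window meet at a single vertex (the middle at position $D+1$), which is the minimal overlap sufficient to force $J=J'$, and hence $J\in\bar J'$.

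The hardest step I anticipate is twofold. First, the sliding-window iteration must be carefully bookkept so that each successive bridge is genuinely applicable (i.e., the pair of length-$(D+1)$ blocks to which left class-closing is applied really does end at a shared vertex produced by the prior step) and so that the combined right-end and first-half bridging produces a single coherent path equal to $\beta$. Second, dead elements $J\in\bar J$ admitting no $w$-outgoing path in $G$ fall outside the scope of the bridging argument. For these I would appeal to the sink structure: since every $\bar J\in\mathcal V(\bar G)$ is of the form $\mathsf{t}_{G,\{I\}}(u)$ for some label $u$ and since $\bar G$ is irreducible, the presence of a dead element of $\bar J$ is transferred to $\bar J'$ by an auxiliary label-comparison argument driven again by the bi-class-closing delays. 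By the symmetric argument with $\bar\alpha$ and $\bar\beta$ interchanged one obtains the reverse inclusion, yielding $\bar J=\bar J'$ and completing the proof.
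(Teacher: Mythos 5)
There is a genuine gap, and it occurs at the very first step: you have misidentified what ``left closing with delay $2D+1$'' asks of $\bar\pi$. For a labelled graph, left closing with delay $N$ means that two paths of length $N+1$ with the same label and the same \emph{terminal} vertex must have the same \emph{last} edge; dually to the right-closing convention, nothing is claimed about their initial edges. Since $\bar\pi$ is right resolving, an edge of $\bar G$ is determined by its initial vertex and its label, so the statement to prove is $\mathsf{t}_{\bar G}(\bar\alpha|_{2D+1})=\mathsf{t}_{\bar G}(\bar\beta|_{2D+1})$, i.e.\ that the two paths already agree one step before their common end. Your target $\mathsf{i}_{\bar G}(\bar\alpha)=\mathsf{i}_{\bar G}(\bar\beta)$ is a far stronger assertion: combined with right resolvingness it would force $\bar\alpha=\bar\beta$ outright, and it is simply false under the hypotheses. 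The paper's own Example~\ref{eg:class-closing_but_not_continuing} (Figure~\ref{fig:class-closing_but_not_continuing}) is a counterexample: there the sink of the subset construction contains distinct vertices $\{J\}$ and $\{I,J\}$, each with an $a$-labelled edge into $\{J,K\}$, and $\{J,K\}$ carries an $a$-labelled self-loop; so for every length one finds two $a^n$-labelled paths with the same terminal vertex but different initial vertices, even though $\pi$ is bi-class-closing with delay $2$. Consequently the sliding-window iteration you describe, which is explicitly engineered to ``pin down the shared starting vertex,'' is aimed at an unprovable conclusion, and the auxiliary discussion of ``dead'' elements of $\bar J$ is machinery for a problem that does not need to be solved.

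Your instinct about the mechanism --- splitting the length $2D+2$ as $(D+1)+(D+1)$ and playing the two class-closing hypotheses against each other, one window from each end meeting in the middle --- is essentially the right one, but it must be pointed at the correct target. The paper's proof regards $\bar\alpha$ and $\bar\beta$ as sets of $w$-labelled paths in $G$ ending in $\bar I$, matches each $\alpha\in\bar\alpha$ with some $\beta\in\bar\beta$ sharing its terminal vertex, uses left class-closing (the blocks $\alpha|_{[1,D+1]}$ and $\beta|_{[1,D+1]}$ are left accessible from $\mathsf{t}_G(\alpha)$ via $w|_{[D+2,2D+2]}$) to get a single bridge $\vec\beta$ from $\beta|_{[1,D+1]}$ to $\alpha|_{[1,D+1]}$, and uses right class-closing on $\alpha|_{[D+1,2D+1]}$ versus $\alpha'|_{[D+1,2D+1]}$ for an arbitrary $\alpha'\in\bar\alpha$ to get a bridge $\vec\alpha$; the concatenation $\vec\beta\,\vec\alpha|_{[2,D+1]}$ then starts in $\mathsf{i}_G(\bar\beta)$ and ends at $\mathsf{t}_G(\alpha'|_{2D+1})$, giving $\mathsf{t}_G(\bar\alpha|_{2D+1})\subset\mathsf{t}_G(\bar\beta|_{2D+1})$ and, by symmetry, equality. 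No iteration or induction on a sliding window is needed --- one bridge from each hypothesis suffices.
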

  \begin{proof}
    Let paths $\bar\alpha$ and $\bar\beta$ over $\bar{G}$ end at a vertex $\bar{I}$ and have the same label $w$ and the same length $2D+2$. To show that $\bar\alpha|_{2D+1}$ and $\bar\beta|_{2D+1}$ share the same terminal vertex on $\bar{G}$, we claim that
    \[ \mathsf{t}_{\bar{G}}(\bar\alpha|_{2D+1})=\mathsf{t}_G(\bar\alpha|_{2D+1})=\mathsf{t}_G(\bar\beta|_{2D+1})=\mathsf{t}_{\bar{G}}(\bar\beta|_{2D+1}) \]
    regarding $\bar\alpha$ and $\bar\beta$ as the sets of paths over $G$ at the same time.
    
    It follows from $\bar{I}=\mathsf{t}_G(\bar\alpha)=\mathsf{t}_G(\bar\beta)$ that for each $\alpha$ in $\bar\alpha$ there is $\beta$ in $\bar\beta$ with $\mathsf{t}_G(\beta)=\mathsf{t}_G(\alpha)$. Since $\pi$ is left class-closing with delay $D$ and $\alpha|_{[1,D+1]}$ and $\beta|_{[1,D+1]}$ are left accessible from $\mathsf{t}_G(\beta)=\mathsf{t}_G(\alpha)$ via $w|_{[D+2,2D+2]}$, the two blocks form a tangled set and there is a bridge $\vec\beta$ from $\beta|_{[1,D+1]}$ to $\alpha|_{[1,D+1]}$ (see Figure~\ref{fig:bi-class-closing_factors_are_almost_finite_type}).
    
    Say $\mathsf{i}_G(\bar\alpha)$ is right accessible from $I$ in $\mathcal{V}$ via $u$. Given $\alpha'$ in $\bar\alpha|_{[1,2D+1]}$, there is a bridge $\vec\alpha$ from $\alpha|_{[D+1,2D+1]}$ to $\alpha'|_{[D+1,2D+1]}$ as they are right accessible from $I$ via $uw|_{[1,D]}$ and $\pi$ is right class-closing with delay $D$. Then $\vec\beta\vec\alpha|_{[2,D+1]}$ is a bridge from $\beta|_{[1,2D+1]}$ to $\alpha'|_{[1,2D+1]}$. That is, there is a path over $G$ starting from a vertex in $\mathsf{i}_G(\bar\beta)$ and ending at $\mathsf{t}_G(\alpha'|_{2D+1})$ when $\alpha'$ is chosen arbitrarily in $\bar\alpha$. Hence $\mathsf{t}_G(\bar\alpha|_{2D+1})$ is a contained in $\mathsf{t}_G(\bar\beta|_{2D+1})$. A symmetric argument gives $\mathsf{t}_G(\bar\alpha|_{2D+1})=\mathsf{t}_G(\bar\beta|_{2D+1})$.
  \end{proof}
  
  \begin{corollary}\label{cor:bi-class-closing_image_is_aft}
    The image of a bi-class-closing factor code from an irreducible shift of finite type is almost finite type.
  \end{corollary}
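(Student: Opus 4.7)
The plan is to exhibit a bi-closing factor code onto $Y$ from an irreducible shift of finite type, thereby witnessing that $Y$ is almost finite type. The natural candidate is the pair $(\bar G,\bar\pi)$ produced by the subset construction recalled just before the proof of Theorem~\ref{thm:class-closing_is_continuing}, where $\bar G$ is understood as the sink $H$ of the full subset graph. Recall that $\bar G$ is irreducible, $\bar\pi(\mathsf{X}_{\bar G})=Y$, and $\bar\pi$ is right resolving by construction; hence $\mathsf{X}_{\bar G}$ is an irreducible shift of finite type and $\bar\pi$ is automatically right closing.

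First I would reduce the hypothesis to a usable form: after recoding, we may assume that $\pi$ is a $1$-block labelling on the edges of an irreducible graph $G$ and that $\pi$ is bi-class-closing with a common right and left delay $D$ (take the maximum of the two delays supplied by Theorem~\ref{thm:class-closing_conditions}). With these normalisations in place, Proposition~\ref{prop:bi-class-closing_image_has_a_bi-closing_cover} applies and yields that $\bar\pi$ is left closing with delay $2D+1$.

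Combining the two observations, $\bar\pi$ is both right and left closing, i.e.\ bi-closing, from the irreducible shift of finite type $\mathsf{X}_{\bar G}$ onto $Y$. By definition this exhibits $Y$ as an almost finite type shift, completing the proof.

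There is essentially no obstacle: the whole argument is an assembly of facts already available in the excerpt. The only point that requires a moment of care is making sure the common delay $D$ can be chosen to serve both the right and left class-closing hypotheses simultaneously, so that Proposition~\ref{prop:bi-class-closing_image_has_a_bi-closing_cover} can be invoked verbatim; taking the maximum of the two individual delays handles this at once.
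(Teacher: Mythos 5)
Your proposal is correct and matches the paper's intended argument: the paper derives the corollary directly from Proposition~\ref{prop:bi-class-closing_image_has_a_bi-closing_cover} together with the fact that $\bar\pi$ on the sink of the subset graph is right resolving and onto $Y$, which is exactly your assembly. Your remark about taking the maximum of the two delays is a valid (and easily verified) normalisation that the paper leaves implicit.
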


  The conclusive relations between factor codes found in the section are summarised in Figure~\ref{fig:relations_in_general}.
  
  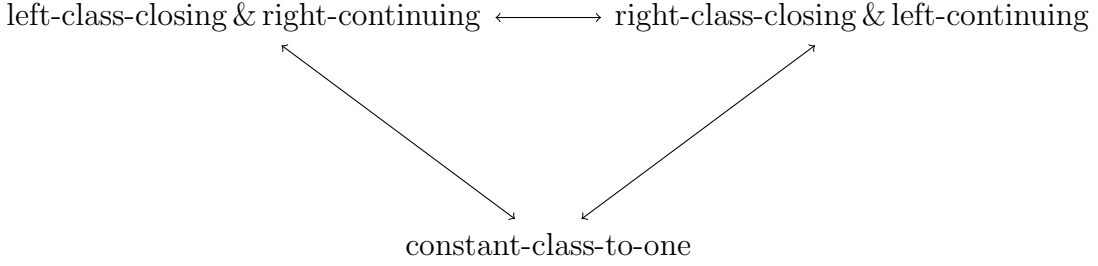
\begin{figure}
    \begin{tikzpicture}[->,auto,inner sep=5pt,node distance=30pt]
      \node (a) at (-4,3) {left-class-closing\,\&\,right-continuing};
      \node (b) at (4,3) {right-class-closing\,\&\,left-continuing};
      \node (c) at (0,0) {constant-class-to-one};      

      \path
        (a) edge[<->] (c)
        (b) edge[<->] (c)
        (a) edge[<->] (b);
    \end{tikzpicture}
    \caption{Relation between class-closing factor codes, continuing factor codes and constant-class-to-one factor codes}\label{fig:relations_in_general}
  \end{figure}
  
  We finish the paper with another observation on a property of class-closing factor codes  analogous to closing factor codes. It is about the points of $Y$ with degree more than $d_\pi$. Let $\pi$ be finite-to-one and $M_\pi=\{y\in Y\mid\pi^{-1}(y)>d_\pi\}$. Then $M_\pi$ is a subshift of $Y$ if and only if $\pi$ is bi-closing. The analogous conditions for class-closing factor codes are set in Proposition~\ref{prop:multiplicity_set}.
  
  \begin{proposition}\label{prop:multiplicity_set}
    Let $\pi$ be a factor code from an irreducible shift of finite type $X$ onto a sofic shift $Y$. Let $M_\pi^r=\{y\in Y\mid d_\pi^r(y)>d\}$ and $M_\pi^l=\{y\in Y\mid d_\pi^l(y)>d\}$ where $d=d_\pi$. Then $M_\pi^r$ (resp., $M_\pi^l$) is a subshift of $Y$ if and only if $\pi$ is right (resp., left) class-closing. Also $M_\pi^r=M_\pi^l$ if and only if $\pi$ is bi-class-closing. 
  \end{proposition}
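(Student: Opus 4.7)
The plan begins by noting that $M_\pi^r$ and $M_\pi^l$ are shift-invariant, since $d_\pi^r$ and $d_\pi^l$ are shift-invariant functions on $Y$; so in each of the three equivalences, being a subshift reduces to being closed. The first two assertions are symmetric, so I focus on the first.

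For the forward direction of the first assertion, I assume $\pi$ is right class-closing with delay $D$ as in Theorem~\ref{thm:class-closing_conditions} and first establish the key structural lemma: if $x, x' \in \pi^{-1}(y)$ satisfy $\mathsf{t}_G(x|_m) = \mathsf{t}_G(x'|_m)$ for some $m$, then $x \sim^r x'$. The proof applies Theorem~\ref{thm:class-closing_conditions}(5) to the outgoing paths of length $D+1$ from the common vertex, producing a bridge at every level $\geq m$, with bridges at lower levels arising by trivial concatenation through the common vertex. As a consequence, distinct right classes over $y$ trace pairwise disjoint vertex supports $V_m(C) = \{\mathsf{t}_G(x|_m) : x \in C\}$ at every position. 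Combining this with the right-resolving cover $\bar\pi \colon \bar X \to Y$ from the proof of Theorem~\ref{thm:class-closing_is_continuing}, I argue that each right class of $\pi$ over $y$ corresponds bijectively to a preimage of $y$ in the sink $H$ under $\bar\pi$, whence $d_\pi^r$ inherits upper semi-continuity from the preimage count of the right-resolving cover. This yields closedness of $M_\pi^r = \{d_\pi^r \geq d+1\}$. For the reverse direction, if $\pi$ is not right class-closing, I negate Theorem~\ref{thm:class-closing_conditions}(4) and take compactness limits to obtain two left-asymptotic preimages of some $y \in Y$ in distinct right classes; irreducibility of $X$ then allows me to splice this local witness into a right-transitive background, producing a sequence $y_n \in M_\pi^r$ converging to a right-transitive $y^* \notin M_\pi^r$ (by Theorem~\ref{thm:previous_results}), violating closedness.

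For the third assertion, $(\Leftarrow)$ is immediate from Corollary~\ref{cor:separate_right_and_left_classes}: bi-class-closing yields $[x]^r = [x]^l$ for all $x \in X$, so $d_\pi^r \equiv d_\pi^l$ and $M_\pi^r = M_\pi^l$. For $(\Rightarrow)$, the equality $M_\pi^r = M_\pi^l$ forces the two sets to have identical topological closures, so either both are subshifts (whence bi-class-closing via the first two assertions) or neither is; in the latter case, I invoke the witnesses of the failure of both right and left class-closing, as constructed in the reverse direction above, to produce a point of $Y$ lying in exactly one of $M_\pi^r$ and $M_\pi^l$, contradicting the assumed equality. The principal obstacle is establishing the bijective correspondence between right classes of $\pi$ over $y$ and $\bar\pi$-preimages of $y$ in the sink $H$ underlying the forward direction of the first assertion; this requires carefully tracking the vertex-support description against the subset construction so that the transition $V_m(C) \mapsto V_{m+1}(C)$ matches the edge structure of the cover.
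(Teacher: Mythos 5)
Your reduction to closedness and your outline of the third assertion are reasonable, but the forward direction of the first assertion --- the heart of the proposition --- rests on a claim that is false. You assert that for a right class-closing $\pi$ the right classes over $y$ correspond bijectively to the $\bar\pi$-preimages of $y$ in the sink $H$, and that $d_\pi^r$ then inherits upper semi-continuity from the preimage count of the cover. The code of Figure~\ref{fig:class-closing} (Example~\ref{eg:class-closing}) already refutes the bijection: it is right resolving, the sink of its subset construction is isomorphic to the original labelled graph, and the point ${}^\infty 101^\infty$ has two $\bar\pi$-preimages in the sink but only one right transition class, because its two preimages are right asymptotic and hence right equivalent. Your structural lemma (equal terminal vertices at some coordinate force right equivalence) only shows that distinct classes have disjoint vertex supports; it does not show that points compatible with distinct sink-preimages lie in distinct classes, and they need not. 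Moreover, even if some injection of classes into sink-preimages were salvaged, the preimage-count function of the sink cover is not upper semi-continuous in general: the cover is right resolving but typically not left closing when $\pi$ is only right class-closing (left closingness of the cover is Proposition~\ref{prop:bi-class-closing_image_has_a_bi-closing_cover} and requires $\pi$ to be \emph{bi}-class-closing), and for a finite-to-one code the set of points with more than $d_\pi$ preimages is a subshift only under bi-closing. So this route cannot yield closedness of $M_\pi^r$. The paper avoids covers entirely: using mutual separation of distinct right classes (condition~(\ref{itm:class_mutual_separation}) of Theorem~\ref{thm:class-closing_conditions}) it shows that for a right class-closing code a word of depth $d$ occurs in $y$ if and only if $d_\pi^r(y)=d$, so that $M_\pi^r$ is exactly the set of points in which no word of depth $d$ occurs --- a set defined by forbidding words, hence a subshift.

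Two smaller gaps. In the reverse direction, a point $y$ with two left asymptotic, non-right-equivalent preimages only gives $d_\pi^r(y)\ge 2$, not $d_\pi^r(y)>d$; when $d\ge 2$ you must still produce $d$ further pairwise separated classes over $y$, which is Lemma~\ref{lmm:separated_classes} (the paper then makes the common left tail left transitive, so that the $\sigma$-orbit closure of $M_\pi^r$ is all of $Y$ and a closed shift-invariant $M_\pi^r$ would have to equal $Y$, which is impossible). In the third assertion, ``neither is a subshift'' does not by itself produce a point lying in exactly one of the two sets; you need the additional observations that $M_\pi^l$ contains no left transitive point (Theorem~\ref{thm:previous_results} applied on the left) while the witness just described places a left transitive point in $M_\pi^r$.
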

  \begin{proof}
    Without loss of generality we will concern only right class-closing cases. Also, assume that $\pi$ is 1-block and $X$ is 1-step.
    
    ($\implies$) Assume that $\pi$ is not right class-closing. Then there are two left asymptotic points $x$ and $x'$ in $X$ with $x\not\sim^rx'$. Changing the common left infinite tail of the two points, we may assume that they are left transitive. Let $y=\pi(x)=\pi(x')$. It is left transitive, too. By Lemma~\ref{lmm:separated_classes} there are at least $d$ mutually separated right classes $C_1=[x]^r,C_2,\cdots,C_d$ over $y$. Since $[x']^r$ is not mutually separated from $C_1$, it is not any of $C_2,\cdots,C_d$. So $\llbracket y\rrbracket^r$ has at least $d+1$ right classes $C_1,\cdots,C_d,[x']^r$, and thus, $y$ is in $M_\pi^r$. As $y$ is left transitive, the $\sigma$-orbit closure of $M_\pi^r$ is the whole $Y$. For $M_\pi^r$ to be closed and $\sigma$-invariant it has to be $Y$ itself, which is impossible.
    
    ($\Longleftarrow$) We claim that for a right class-closing factor code $\pi$ a word with the depth $d$ occurs in $y$ if and only if $d_\pi^r(y)=d$. One direction is easy: by Theorem 4.22 of \cite{AllQ13}, given any point $y$ in $Y$, a word with the depth $\le d$ occurs in $y$ if $d_\pi^r(y)=d$. To show the converse, assume that a word $w$ with $d_\pi(w)=d$ occurs in $y$, say $y|_{[1,|w|]}=w$ and that $d_\pi^r(y)>d$. Then there must be two right classes over $y$ such that their points are routable through the same symbol at some $1<n<|w|$. Those two right classes end up not being mutually separated and it follows that $\pi$ is not right class-closing. Hence, the claim is shown to be true.
    
    Assume that $\pi$ is right class-closing. Then the set $M_\pi^r$ is given by an expression $\{y\in Y\mid d_\pi(y|_{[m,n]})>d\,\forall m,n\text{ with }m\le n\}$. That is, it is obtained by forbidding the words with the depth $d$ from $Y$ and is expressed as the intersection $\bigcap_{d_\pi(w)=d}\bigcap_{n\in\mathbb{Z}}\sigma^n(X\setminus[w]_0)$ where $[w]_0$ is a cylinder set at 0 determined by $w$. Here $\bigcap_{n\in\mathbb{Z}}\sigma^n(X\setminus[w]_0)$ expresses the set obtained by forbidding $w$ from $Y$. The latter intersection is closed and $\sigma$-invariant, and hence is a subshift of $Y$. So is $M_\pi^r$.
    
    Now, if $\pi$ is bi-class-closing, then both $M_\pi^r$ and $M_\pi^l$ are the same such subshift of $Y$. If $\pi$ is class-closing in only one direction of right or left, then only one of $M_\pi^r$ or $M_\pi^l$ is a subshift of $Y$ so that they cannot be the same. The last case reduces to when $\pi$ is neither left nor right class-closing. Note that for a factor code $\pi$ from an irreducible shift of finite type, no right transitive point belongs to $M_\pi^r$ and no left one to $M_\pi^l$ (see Theorem~\ref{thm:previous_results}). Moreover, if $\pi$ is not right class-closing, then $M_\pi^r$ contains some left transitive points (see the proof in ($\implies$)). Hence, if $\pi$ is neither left nor right class-closing, then $M_\pi^r\setminus M_\pi^l$ contains the same left transitive points that $M_\pi^r$ does. Therefore $M_\pi^r\setminus M_\pi^l$ is nonempty and $M_\pi^r\ne M_\pi^l$.
  \end{proof}
  
  \begin{acknowledgement*}
    The first author was supported by FONDECYT project 11140369, the second author by FONDECYT project 3130718 and the third author by the Basic Science Research Program through the National Research Foundation of Korea (NRF) funded by the Ministry of Education (2012R1A1A2006874).
  \end{acknowledgement*}

\bibliographystyle{amsplain}
\bibliography{ref}

\end{document}